\title{Sections and Chapters}
\title{\bf Weighted group algebras}
\author{ Ali Rejali $^1$, \thanks{2020 Mathematics Subject Classifcation. Primary: 46J05; Secondary: 46J10} ,  \and Maryam Aghakoochaki  $^2$, \thanks{Corresponding author}}
\date{
	$^1$Department of Pure Mathematics, Faculty of Mathematics and Statistics, University of Isfahan, Isfahan 81746-73441, Iran \\ \texttt{rejali@sci.ui.ac.ir, Orcid: 0000-0001-7270-665X}\\%
Isfahan University  \\ \texttt{mkoochaki@sci.ui.ac.ir, Orcid: 0000-0002-3851-6550}	$^2$\\[2ex]%
\today
}
\newcommand{\tnorm}[1]{{\left\vert\kern-0.25ex\left\vert\kern-0.25ex\left\vert #1 
    \right\vert\kern-0.25ex\right\vert\kern-0.25ex\right\vert}}
\theoremstyle{plain}
\newtheorem{thm}{Theorem}[section]
\newtheorem{lem}[thm]{Lemma}
\theoremstyle{definition}
\newtheorem{rem}[thm]{\bf Remark}
\newtheorem{ex}[thm]{\bf Example}
\begin{document}
\maketitle

\begin{abstract}
		Let   $G$ be a locally compact Abelian group, and $w: G\to (0, \infty)$ be a Borel measurable weighted function.
In this paper, the algebraic and topological properties of group algebra are studied and assessed. 
We show that the weighted group algebra $L^{1}(G, w)$ is regular if and only if $w$ is a nonquasianalytic weight function. Also $L^{1}(G, w)$ is Tauberian, for any Borel measurable weight function $w$
on the group $G$.

\noindent\textbf{Keywords:} Banach algebra,  Regularity, Tauberian,   Weigted group algebra.
\end{abstract}

\section{intrdoction}

In this paper, $G$ is an Abelian locally compact group, and $w: G\to (0, \infty)$ is a Borel measurable weight function.
Then $L^{1}(G, w)$ is the algebra of all  Borel measurable functions $f: G\to \mathbb C$ such that 
$$
\|f\|_{1, w} := \int_{G}\mid f(x)\mid w(x)dx< \infty
$$
The space   $L^{1}(G, w)$ with the multiplication 
$$
f\star g(x)= \int_{G}f(y)g(y^{-1}x)dy
$$
for $f,g \in  L^{1}(G, w)$, is a Banach algebra; see \cite{D}, \cite{DL} and \cite{RV}.
The first Arens product $\diamond$ on the second dual of a Banach algebra $A$ is defined as the following:
$$
<\Phi \diamond\Psi, f>= \Phi(\Psi f)
$$
Where
$$
\Psi f(a)= \Psi(fa)
$$
and
$$
fa(b)= f(ab)
$$
for all $\Phi, \Psi\in A^{**}$, $f\in A^{*}$ and $a,b \in A$. Assume that $A$ is a Banach algebra. $A$ is called Arens regular if for each $\Phi\in A^{**}$, the mapping $\Psi\mapsto \Phi o \Psi$ is weak*- continuous on $A^{**}$.
The Banach algebra $A$ is called regular, if for each proper subset $K$ of $\Delta(A)$
closed in the Gelfand topology and each point $\varphi\in\Delta(A)\backslash K$, there exists $f\in A$
such that $\hat{f}(\varphi) = 1$ and $\hat f$ vanishes on $K$. Assume that $E$ is a Banach $A$- bimodule. $A$ is called amenable if every continuous derivation $A\to E^{*}$
is inner. Let $A$ be a commutative Banach algebra with non-empty
character space. Put 
$$
A_{0}= \{a\in A: supp(\hat a) ~ is ~ compact\}
$$
If $A_{0}$ is norm dense in $A$, then $A$ is called Tauberian.
   Many authors studied algebraic and topological properties of $L^{1}(G, w)$. Gronback in \cite{Gr} showed that $L^{1}(G, w)$ is an amenable Banach algebra if and only if $G$ is an amenable group and
the weight function $w^{*}: G\to [1, \infty)$ is bounded, where 
$$
w^{*}(x):= w(x)w(x^{-1})  \quad (x\in G)
$$
 Rejali and Mehdipour in \cite{MER} proved that the Banach algebra $L^{1}(G, w)$ is Arens- regular if and only if $G$ is finite group or the function $\Omega: G\times G\to (0, 1]$ 
is 0- cluster function, this means that for all sequences $(x_{n})$ and $(y_{n})$ of  distinct members of $G$, there exist subsequences  $(x_{n}^{'})$ and $(y_{n}^{'})$ such that
\begin{align*}
\underset{n}{lim}\underset{m}{lim} \Omega(x_{n}^{'}, y_{n}^{'})=0= \underset{m}{lim}\underset{n}{lim} \Omega(x_{n}^{'}, y_{n}^{'})
\end{align*}
Where
$$
\Omega(x, y)= \frac{w(xy)}{w(x)w(y)}
$$
for $x,y\in G$. Rejali and Vishki in \cite{RV1} showed that the function $w^{*}$ is bounded if and only if there exists $\epsilon>0$ such that 
$$
\Omega(x, y)\geq\epsilon> 0 \quad (x,y\in G)
$$
As a result, $L^{1}(G, w)$ is Arens regular and amenable if and only if the group $G$ is finite. 

    Recently, many algebraic and homological properties of $L^{1}(G, w)$ and it's second dual were studied in the articles of Rejali and Mehdipour in [\cite{MER}, \cite{MER2} and \cite{MER3}].
 For further study, we refer readers to references \cite{DL},  \cite{Do}, \cite{GRS}, \cite{MR}, \cite{MR2},     \cite{R} and \cite{RV1} .
Domar in \cite{Do}, showed that $L^{1}(G, w)$ is Tauberian and regular if and only if the weight function $w$ is nonquasianalytic, that is
$$
\sum_{n=1}^{\infty}\frac{w(x^{n})}{n^{2}} <\infty
$$
for all $x\in G$. 
In this paper,  we show that $L^{1}(G, w)$ is always Tauberian. Therefore, $L^{1}(G, w)$ is regular if and only if the weight function $w$ is nonquasianalytic.
In \cite{kan},  there is another criterion for the regularity of $L^{1}(G, w)$. In fact,  $L^{1}(G, w)$ has the unique uniform norm property if and only if $L^{1}(G, w)$ is regular; see [\cite{kan}, 4. 7. 3].

  In this paper, we have studied briefly the algebraic and homological properties of $L^{1}(G, w)$ and show that each positive definite function on $L^{1}(G, w)$ is in the following form
$$
\varphi(f)= \int_{\widehat{G_{w}}}\gamma(f)d\mu(\gamma)
$$
for some $\mu\in M({\widehat{G_{w}}})$ and $f\in L^{1}(G, w)$, where ${\widehat{G_{w}}} = \Delta( L^{1}(G, w))$.
The correlation between  $ L^{1}(G, w)$ and $ L^{1}(G)$  are assesed.
Furthermore, we study the Gelfand representation of $ L^{1}(G, w)$. We show that the Gelfand map on $ L^{1}(G, w)$ is onto [resp. isometric] if $G$ is finite [resp. trival].

\section{ Weighted group algebras}

               Let $G$ be a locally compact group and $w: G\to (0, \infty)$ be a Borel measurable weighted function such that $w(xy)\leq w(x)w(y)$, for all $x, y\in G$. 
 Assume that $L^{\infty}(G, \frac{1}{w})$ is the space of all essentially weighted bounded function $f: G\to \mathbb C$ such that 
$$
\|f\|_{\infty, \frac{1}{w}}:= esssup\{\frac{\mid f(x)\mid}{w(x)}: x\in G\}<\infty
$$
Then $L^{\infty}(G, \frac{1}{w})$ with w- pointwise product,
$$
f\underset{w}{.}g(x):= \frac{f(x)g(x)}{w(x)}  \quad (x\in G)
$$
is a $C^{*}$- algebra; see\cite{RV}. Furthermore, the map 
\begin{align*}
\Phi ~: L^{\infty}(G, \frac{1}{w}) &\to L^{1}(G, w)^{*}\\
                                                      g &\mapsto \Phi_{g}
\end{align*}
Where
$$
\Phi_{g}(f):= \int_{G}f(x)g(x)dx   \quad (f\in L^{1}(G, w)).
$$
is isometrically isomorphism map.

\begin{thm}
Let $G$ be a locally compact group, and $w: G\to (0, \infty)$ be a Borel measurable weight function. Then
$$
 L^{1}(G, w)^{*}= L^{\infty}(G, \frac{1}{w}).
$$
 isometrically isomorphism as Banach spaces.
\end{thm}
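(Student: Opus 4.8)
The plan is to deduce the stated duality from the classical identification $L^1(G)^* = L^\infty(G)$ by transporting everything through multiplication by the weight $w$. First I would introduce the weighting operator
$$
M_w : L^1(G,w) \to L^1(G), \qquad M_w f = f\cdot w,
$$
where $f\cdot w$ denotes the pointwise product. Since $w$ takes values in $(0,\infty)$, this map is a linear bijection with inverse $h \mapsto h/w$, and it is isometric because $\|M_w f\|_1 = \int_G |f(x)|w(x)\,dx = \|f\|_{1,w}$. Thus $M_w$ is an isometric isomorphism of Banach spaces, and the whole statement will follow once the unweighted dual is understood.

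Next I would dualize. The adjoint $M_w^* : L^1(G)^* \to L^1(G,w)^*$ is again an isometric isomorphism, so it suffices to describe $L^1(G)^*$. Here I would invoke the classical duality $L^1(G)^* = L^\infty(G)$: every $h \in L^\infty(G)$ acts by $F_h(k) = \int_G k(x)h(x)\,dx$ for $k \in L^1(G)$, with $\|F_h\| = \|h\|_\infty$. Composing, an arbitrary functional $\Lambda \in L^1(G,w)^*$ takes the form $\Lambda = M_w^*(F_h) = F_h \circ M_w$ for a unique $h \in L^\infty(G)$, which unwinds to
$$
\Lambda(f) = F_h(fw) = \int_G f(x)\,w(x)\,h(x)\,dx \qquad (f \in L^1(G,w)).
$$

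Finally I would read off the statement. Setting $g := w\,h$, we have $g/w = h \in L^\infty(G)$, so $g \in L^\infty(G,\tfrac1w)$ with $\|g\|_{\infty,1/w} = \|h\|_\infty$; conversely the assignment $h \mapsto wh$ is an isometric isomorphism of $L^\infty(G)$ onto $L^\infty(G,\tfrac1w)$. Then $\Lambda(f) = \int_G f(x)g(x)\,dx = \Phi_g(f)$, so every functional is of this form and $\|\Phi_g\| = \|M_w^*(F_h)\| = \|F_h\| = \|h\|_\infty = \|g\|_{\infty,1/w}$. This exhibits the map $\Phi$ displayed before the theorem as a surjective isometry, which is exactly the assertion. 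The one genuinely delicate point is the classical duality $L^1(G)^* = L^\infty(G)$ itself: Haar measure on $G$ need not be $\sigma$-finite, so this step rests on the fact that Haar measure is localizable (decomposable), which secures the duality in full generality; everything else is routine bookkeeping with the isometries $M_w$ and $h \mapsto wh$.
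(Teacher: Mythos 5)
Your proposal is correct and follows essentially the same route as the paper: both reduce the weighted duality to the classical identification $L^{1}(G)^{*}=L^{\infty}(G)$ by transporting functionals through multiplication by $w$ (you package this as the adjoint of the isometry $M_w$, the paper writes out the same transfer by hand via the auxiliary functional $J$ and the map $\varphi$). Your explicit remark that the classical duality requires localizability of Haar measure in the non-$\sigma$-finite case is a point the paper passes over in silence, but it does not change the argument.
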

\begin{proof}
Let $\Phi: L^{\infty}(G, \frac{1}{w})\to L^{1}(G, w)^{*}$ defined by  $g\mapsto \Phi_{g}$, where 
$$
\Phi_{g}(f):= \int_{G}f(x)g(x)dx   \quad (f\in L^{1}(G, w)).
$$
Then 
\begin{align*}
\mid \Phi_{g}(f)\mid &\leq \int_{G}\mid f(x)\mid w(x) \frac {\mid g(x)\mid}{w(x)}dx \\
                                 &\leq \|g\|_{\infty, \frac{1}{w}}. \|f\|_{1,w}
\end{align*}
Therefore 
$$
\|\Phi_{g}\|\leq \|g\|_{\infty, \frac{1}{w}}
$$
So $\Phi$ is continuous. Let $I\in  L^{1}(G, w)^{*}$ and for $f\in L^{1}(G)$, define 
$J(f):= I(fw)$. Then $J\in L^{1}(G)^{*}= L_{\infty}(G)$. Hence there exist $g\in L_{\infty}(G)$ such that 
$$
J(f)= \int_{G}f(x)g(x)dx
$$
for all $f\in L^{1}(G)$. Let $h:= gw$. Then $h\in  L^{\infty}(G, \frac{1}{w})$ and, 
\begin{align*}
\Phi_{h}(k) &= \int_{G}h(x)k(x)dx\\
                   &= \int_{G}g(x)w(x)k(x)dx\\
                    &= J(kw)= I(k)   
\end{align*}
for all $k\in  L^{1}(G, w)$. Thus $\Phi_{h}= I$ and $\Phi$ is surjective. Also, 
\begin{align*}
\|\Phi_{g}\| &= sup \{\mid \Phi_{g}(f)\mid : \|f\|_{1,w}\leq1\}\\
                   &= sup \{\mid \varphi_{\frac{g}{w}}(fw)\mid: \|fw\|_{1}\leq 1\}\\
                    &= \|\frac{g}{w}\|_{\infty}= \|g\|_{\infty, \frac{1}{w}}
\end{align*}
for each $g\in  L^{\infty}(G, \frac{1}{w})$.
Where $\varphi: L^{\infty}(G)\to L^{1}(G)^{*}$ defined by $f\mapsto \varphi_{f}$ such that 
$$\varphi_{f}(h)= \int_{G}f(x)h(x)dx$$
for each $f\in  L^{\infty}(G)$, $h\in L^{1}(G)$. Thus $\Phi$ is an isometric map.
\end{proof}
Let $w: G\to [1, \infty)$ be a Borel- measurable weighted function. Then $M(G, w)$ is the Banach algebra of all bounded Radon measures $\mu: B(G)\to \mathbb C$ such that 
$\mu w\in M(G)$ where 
$$
\mu w(E) = \int_{E}w(x)d\mu(x)   \quad (E\in B(G))
$$
If $\mu, \nu\in M(G, w)$, then
$$
\mu\star\nu(E)= \int_{G}\int_{G} \chi_{E}(xy)d\mu(x)d\nu(y)
$$
and 
$$
\|\mu\|_{w}:= \|\mu w\|.
$$
Since $w\geq 1$, so  $M(G, w)$  is norm- dense in $M(G)$ and 
$$
 M(G, w)= C_{0}(G, \frac{1}{w})^{*}
$$
isometrically isomorphism as Banach algebras. Where $C_{0}(G, \frac{1}{w})$ is the algebra of  all  Borel- measurable functions $f: G\to \mathbb C$ such that $\frac{f}{w}\in C_{0}(G)$; see \cite{RV}.
\begin{thm}
Let $G$ be a locally compact group, and $w: G\to (0, \infty)$ be a Borel measurable weighted function. Then
$$
 M(G, w)= C_{0}(G, \frac{1}{w})^{*}
$$
under the duality
\begin{align*}
\Phi:    M(G, w) &\to  C_{0}(G, \frac{1}{w})^{*}\\
                         & \mu\mapsto \Phi_{\mu}
\end{align*}
where
$$
\Phi_{\mu}(f)= \int_{G}f(x)d\mu(x)   \quad (f\in C_{0}(G,  \frac{1}{w}))
$$
 isometrically isomorphism as Banach algebras.
\end{thm}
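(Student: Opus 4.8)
The plan is to reduce the statement to the classical Riesz representation theorem $C_{0}(G)^{*}=M(G)$, in exactly the way the proof of the previous theorem reduced matters to $L^{1}(G)^{*}=L^{\infty}(G)$. The two weight-scaling bijections that drive the argument are $T: C_{0}(G)\to C_{0}(G,\frac{1}{w})$, $g\mapsto gw$, which is an isometry since $\|gw\|_{\infty,\frac{1}{w}}=\|g\|_{\infty}$, and $S: M(G,w)\to M(G)$, $\mu\mapsto \mu w$, which is an isometry by the very definition $\|\mu\|_{w}=\|\mu w\|$. The adjoint $T^{*}$ composed with $S^{-1}$ should produce $\Phi$ up to bookkeeping, so the real task is to verify directly that $\Phi$ is well defined, bounded, surjective, isometric, and multiplicative.

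First I would check boundedness and the isometry. For $\mu\in M(G,w)$ and $f\in C_{0}(G,\frac{1}{w})$, writing $f=\frac{f}{w}\,w$ gives
\[
|\Phi_{\mu}(f)|=\left|\int_{G}\frac{f(x)}{w(x)}\,d(\mu w)(x)\right|\le\left\|\frac{f}{w}\right\|_{\infty}\|\mu w\|=\|f\|_{\infty,\frac{1}{w}}\,\|\mu\|_{w},
\]
so $\|\Phi_{\mu}\|\le\|\mu\|_{w}$. For the reverse inequality I would restrict the supremum to functions of the form $f=gw$ with $g\in C_{0}(G)$ and $\|g\|_{\infty}\le 1$; since $\|gw\|_{\infty,\frac{1}{w}}=\|g\|_{\infty}$ and $\Phi_{\mu}(gw)=\int_{G}g\,d(\mu w)$, the supremum equals $\|\mu w\|=\|\mu\|_{w}$ by the Riesz norm identity, giving equality.

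Next, surjectivity. Given $I\in C_{0}(G,\frac{1}{w})^{*}$, I would define $J:C_{0}(G)\to\mathbb{C}$ by $J(g):=I(gw)$; then $J\in C_{0}(G)^{*}=M(G)$, so Riesz supplies $\nu\in M(G)$ with $J(g)=\int_{G}g\,d\nu$. Setting $d\mu:=\frac{1}{w}\,d\nu$ makes $\mu w=\nu\in M(G)$, hence $\mu\in M(G,w)$, and for every $f\in C_{0}(G,\frac{1}{w})$ one computes $\Phi_{\mu}(f)=\int_{G}f\,d\mu=\int_{G}\frac{f}{w}\,d\nu=J(\frac{f}{w})=I(f)$, so $\Phi_{\mu}=I$. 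This is precisely the $J(f):=I(fw)$ device already used for $L^{1}(G,w)^{*}$.

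Finally, the clause \emph{as Banach algebras} demands that $\Phi$ intertwine convolution on $M(G,w)$ with the convolution-type product on the dual given by $\langle\Phi_{\mu}\star\Phi_{\nu},f\rangle=\int_{G}\int_{G}f(xy)\,d\mu(x)\,d\nu(y)$. Here I would first observe that $w\ge 1$ forces $M(G,w)\subseteq M(G)$, and that submultiplicativity $w(xy)\le w(x)w(y)$ is exactly what guarantees $\mu\star\nu\in M(G,w)$ whenever $\mu,\nu\in M(G,w)$, so the algebra structure is simply inherited from $M(G)$; then $\Phi_{\mu\star\nu}(f)=\int_{G}f\,d(\mu\star\nu)$ unwinds through the definition of convolution to the double integral above. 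I expect this multiplicativity step, together with the Fubini-type interchange and the verification that the weighted convolution remains inside $M(G,w)$, to be the main obstacle, since the Banach-space parts are routine once the two scaling isometries $T$ and $S$ are in place.
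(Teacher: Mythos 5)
Your proposal is correct and follows essentially the same route as the paper: bound $\Phi_{\mu}$ by factoring $f=\frac{f}{w}\,w$, obtain the isometry and surjectivity by transporting everything to the classical duality $C_{0}(G)^{*}=M(G)$ via the scaling maps $g\mapsto gw$ and $\mu\mapsto\mu w$ (the paper's $I(g):=J(gw)$ and $\mu=\nu w^{-1}$ are exactly your devices), and verify multiplicativity by unwinding the convolution integral. Your additional remark that submultiplicativity of $w$ is what keeps $\mu\star\nu$ inside $M(G,w)$ is a well-definedness point the paper leaves implicit, but it does not change the argument.
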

\begin{proof}
Let $f\in  C_{0}(G, \frac{1}{w})$ and $\mu\in M(G, w)$. Then
$$
\mid \Phi_{\mu}(f)\mid \leq \|f\|_{\infty, \frac{1}{w}}. \|w\mu\|_{w}
$$
Thus $\|\Phi_{\mu}\|\leq\|\mu\|$, so $\Phi$ is continious . Assume that $I, J\in  C_{0}(G, \frac{1}{w})^{*}$. The following is the yield:
$$
I\star J(f):= \int_{G}\int_{G} f(xy)d\mu(x)d\nu(y)
$$
where $\mu$ [resp. $\nu$] is the corresponding measure to the linear function I [resp. J]. It is to be noted that  $ M(G, w)= C_{0}(G, \frac{1}{w})^{*}$ as Banach spaces; see \cite{RV}. Therefore 
\begin{align*}
\Phi(\mu\star\nu)(f) &= \int_{G} f(z)d\mu\star \nu(z)\\
                                 &=  \int_{G}\int_{G} f(xy)d\mu(x) d\nu(y)\\
                                 &= \Phi_{\mu}\star \Phi_{\nu}(f)
\end{align*}
Hence $\Phi$ is a homomorphism. Also,
 \begin{align*}
\|\Phi_{\mu}\| &= sup\{ \mid\Phi_{\mu}(f)\mid: \|f\|_{\infty, \frac{1}{w}}\leq 1\}\\
                        &=  sup\{ \mid\varphi_{\mu w}(\frac{f}{w})\mid: \|\frac{f}{w}\|_{\infty}\leq 1\}\\
                        &= \|\mu w\|= \|\mu\|_{w}
\end{align*}
where $\varphi: M(G)\to C_{0}(G)^{*}$ such that $\nu\mapsto \varphi_{\nu}$ and $\varphi_{\nu}(g)= \int_{G}g(x)d\nu(x)$, for $g\in C_{0}(G)$.
So $\Phi$ is an isometric map. 
If $J\in C_{0}(G, \frac{1}{w})^{*}$, then $I(g):= J(gw)$, for $g\in C_{0}(G)$, defined a linear functional in $ C_{0}(G)^{*}= M(G)$. Thus there exist unique $\nu\in M(G)$ such that 
$$
I(g)= \int_{G}g(x)d\nu(x).
$$
Put $\mu = \nu w^{-1}\in M(G, w)$. Then the following is immediate, 
\begin{align*}
\Phi_{\mu}(f) &= \int_{G}f(x)d\mu(x)\\
                      &= \int_{G} \frac{f(x)}{w(x)}d\nu(x)\\
                      &=I(\frac{f}{w})= J(f)
\end{align*}
for each $f\in C_{0}(G, \frac{1}{w})$. Thus $\Phi_{\mu}= J$, so $\Phi$ is surjective. This completes the proof.
\end{proof}
In this paper, $\widehat{G_{w}}$ is the set of all Borel- measurable multiplicative complex-valued functions $\chi$ such that $\frac{\chi}{w}$ is bounded. In the following, we modify the result [\cite{kan}, 2.8.2], for easy reference.
\begin{thm}
Let $G$ be a locally compact Abelian group, and $w: G\to (0, \infty)$ be a Borel measurable weighted function. Then 
\begin{align*}
\Phi : \widehat{G_{w}} &\to \Delta(L^{1}(G, w))\\
                                      & \chi\mapsto \Phi_{\chi}
\end{align*}
where
$$
 \Phi_{\chi}(f):= {\hat f}(\chi)
$$
is homomorphic and isomorphism.
\end{thm}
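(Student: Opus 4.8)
The plan is to check the three properties that together make $\Phi$ an isomorphism onto $\Delta(L^1(G,w))$: that each $\Phi_\chi$ is genuinely a character of $L^1(G,w)$ (so that $\Phi$ lands in $\Delta(L^1(G,w))$ and is ``homomorphic''), that $\Phi$ is injective, and that $\Phi$ is surjective. For the first point, fix $\chi\in\widehat{G_w}$. Since $\chi/w$ is bounded, $\chi\in L^\infty(G,\frac1w)$, so by Theorem 2.1 the functional $\Phi_\chi(f)=\hat f(\chi)=\int_G f(x)\chi(x)\,dx$ is bounded on $L^1(G,w)$. Using $\chi(xy)=\chi(x)\chi(y)$, Fubini's theorem, and the substitution $x=yz$ (legitimate by invariance of Haar measure) in the convolution integral gives $\Phi_\chi(f\star g)=\Phi_\chi(f)\Phi_\chi(g)$, and $\Phi_\chi\neq0$ because $\chi$ is a nonzero element of $L^\infty(G,\frac1w)$. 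Hence $\Phi_\chi\in\Delta(L^1(G,w))$.

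Injectivity is immediate: if $\Phi_{\chi_1}=\Phi_{\chi_2}$, then $\int_G f(\chi_1-\chi_2)\,dx=0$ for every $f\in L^1(G,w)$, so $\chi_1=\chi_2$ almost everywhere; since a locally bounded Borel-measurable homomorphism into the multiplicative group of $\mathbb C$ is automatically continuous, $\chi_1=\chi_2$ everywhere.

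The substantive part is surjectivity. Given $\psi\in\Delta(L^1(G,w))$, Theorem 2.1 yields $g\in L^\infty(G,\frac1w)$ with $\psi(f)=\int_G f(x)g(x)\,dx$. Writing out $\psi(f\star h)=\psi(f)\psi(h)$ and applying Fubini together with invariance of Haar measure gives, for each fixed $h$,
\[
\int_G f(y)\bigl(\psi(L_y h)-g(y)\psi(h)\bigr)\,dy=0\qquad(f\in L^1(G,w)),
\]
where $(L_y h)(x)=h(y^{-1}x)$; hence $g(y)\psi(h)=\psi(L_y h)$ for almost every $y$. Choosing $h$ with $\psi(h)\neq0$ (possible as $\psi\neq0$) exhibits $g$ as equal almost everywhere to $y\mapsto \psi(L_y h)/\psi(h)$, which is continuous because $y\mapsto L_y h$ is norm-continuous on $L^1(G,w)$. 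Replacing $g$ by this continuous representative $\chi$, the almost-everywhere identity $\chi(yz)=\chi(y)\chi(z)$ coming from the same computation upgrades to all $y,z$ by continuity; since $\chi\neq0$ it is a continuous semicharacter with $\chi/w$ bounded, so $\chi\in\widehat{G_w}$ and $\Phi_\chi=\psi$.

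The step I expect to be the genuine obstacle is this regularization inside the surjectivity argument: one must pass from a merely measurable representing function $g$ to a continuous multiplicative $\chi$. Producing the continuous representative relies on norm-continuity of translation on $L^1(G,w)$ --- which in turn uses that a measurable submultiplicative weight is locally bounded --- and upgrading the almost-everywhere functional equation to an everywhere identity then requires that continuity. Everything else is bookkeeping resting on Theorem 2.1.
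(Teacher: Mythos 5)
Your proposal follows essentially the same route as the paper's proof: verify that each $\Phi_{\chi}$ is a nonzero multiplicative functional on $L^{1}(G,w)$, check injectivity, and obtain surjectivity by recovering a semicharacter from a given $\varphi\in\Delta(L^{1}(G,w))$ via translation. In fact you are more careful than the paper on the two delicate points. For injectivity, the paper appeals to semisimplicity of $L^{1}(G,w)$, which gives injectivity of the Gelfand transform $f\mapsto\hat f$ rather than of $\chi\mapsto\Phi_{\chi}$; your direct argument (the integrals against all $f$ agree, hence $\chi_{1}=\chi_{2}$ a.e., upgraded to everywhere by automatic continuity of measurable locally bounded homomorphisms) is the one that actually does the job. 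For surjectivity, the paper writes down the single formula $\chi(y)=\int_{G}\overline{\varphi(x)}\,\overline{g(y^{-1}x)}\,dx$ and asserts the conclusion, whereas you carry out the real work: representing $\varphi$ by some $g\in L^{\infty}(G,\tfrac{1}{w})$ via Theorem 2.1, deriving $g(y)\psi(h)=\psi(L_{y}h)$ a.e., and regularizing to a continuous multiplicative representative using norm-continuity of translation (which indeed rests on local boundedness of the weight). The one item you omit is the paper's part (iv): the word ``homomorphic'' in the statement is evidently intended as ``homeomorphic,'' and the paper records (however briefly) that $\chi_{\alpha}\to\chi$ if and only if $\Phi_{\chi_{\alpha}}\to\Phi_{\chi}$ in the weak$^{*}$ topology, identifying the natural topology on $\widehat{G_{w}}$ with the Gelfand topology on $\Delta(L^{1}(G,w))$. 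Add a sentence to that effect; with it your argument covers everything the paper's does, and more rigorously.
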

\begin{proof}
(i) If $\chi\in \widehat{G_{w}}$, then 
\begin{align*}
 \Phi_{\chi}(f_{1}\star f_{2}) &= (f_{1}\star f_{2}\widehat)(\chi)\\
                                                &= (\widehat{f_{1}}.\widehat{f_{2}})(\chi)\\
                                                &=  \widehat{f_{1}}(\chi).\widehat{f_{2}}(\chi)
\end{align*}
for all $f_{1},f_{2}\in L^{1}(G, w)$. Thus $ \Phi_{\chi}$ is multiplicative. Since $ L^{1}(G, w)$ is a commutative Banach algebra, so $ \Phi_{\chi}$ is  continuous and 
\begin{align*}
\mid \Phi_{\chi}(f)\mid &= \mid\int_{G}\overline{\chi}(x)f(x)dx\mid\\
                                    &\leq \|\chi\|_{\infty, \frac{1}{w}}.\|f\|_{1,w}
\end{align*}
for all $f\in L^{1}(G, w)$. Therefore
$$
\|\Phi_{\chi}\|\leq \|\chi\|_{\infty, \frac{1}{w}}.
$$
(ii) Due to [\cite{kan}, 2. 8. 10], $ L^{1}(G, w)$ is semisimple. Hence the Gelfand map\\
 $\Gamma_{w}: L^{1}(G, w)\to C_{0}(\widehat{G_{w}})$ where $f\mapsto \hat f$ such that 
$$
\hat{f}(\chi)= \int_{G} \overline{\chi(x)}f(x)dx
$$
 is injective. If $\Phi_{\chi_{1}}= \Phi_{\chi_{2}}$, then 
$$
\hat{f}(\chi_{1})= \hat{f}(\chi_{2})
$$
for all $f\in L^{1}(G, w)$. Thus $\chi_{1}= \chi_{2}$ and so $\Phi$ is injective.\\
(iii) Assume that $\varphi\in \Delta( L^{1}(G, w))$ and $g\in  L^{1}(G, w)$, where $\varphi(g)=1$. If
$$
\chi(y)= \int_{G}\overline{\varphi(x)}.\overline{g(y^{-1}x)}dx
$$
then $\varphi= \Phi_{\chi}$ where $\chi\in\widehat{G_{w}}$ and $\Phi$ is surjective.\\
(iv) If $\chi_{\alpha}\to \chi$, then
$$
\Phi_{\chi_{\alpha}}(f)= {\hat f}(\chi_{\alpha})\to {\hat f}(\chi)= \Phi_{\chi}(f)
$$
for all $f\in L^{1}(G, w)$. Thus $\Phi_{\chi_{\alpha}}\overset{w^{*}}{\to} \Phi_{\chi}$ and conversly.
\end{proof}


\section{Regularirty of weighted group algebra}

                        Let $G$ be a locally compact Abelian group, and $w: G\to (0, \infty)$ be a Borel measurable weight function.
Assume that $A= ( L^{1}(G, w), \star)$. Then $A$ is a commutative semisimple Banach algebra. Define
\begin{align*}
r_{A}(f) & =inf \{\|f^{n}\|_{1,w}^{\frac{1}{n}} : n\in \mathbb N\}\\
                         &= \underset{n\to \infty}{lim} \{\|f^{n}\|_{1,w}^{\frac{1}{n}}\}     \quad (f\in A)
\end{align*}
Then\\
(i) 
$$
r_{A}(\lambda f) = \overline{ \lambda} r_{A}(f)
$$
(ii)
$$
r_{A}(f + g) \leq r_{A}(f)+ r_{A}(g)
$$
(iii)
$$
r_{A}(f\star g)\leq r_{A}(f)r_{A}(g)
$$
(iv)
$$
\mid r_{A}(f) - r_{A}(g)\mid\leq r_{A}(f-g)
$$
(v) 
$$
r_{A}(f)\leq \|f\|_{1,w}
$$
for all $f, g\in A$ and $\lambda\in\mathbb C$; see [\cite{kan}, 1.2.13].\\
Let 
$$
I= Ker(r_{A}) =\{ f\in L^{1}(G, w): r_{A}(f)= 0\}
$$
Then $I$ is a closed ideal in $L^{1}(G, w)$. Thus $A_{r}:= (\frac{A}{I}, \|.\|)$ is a commutative semisimple Banach algebra, where 
$$
\|f+I\|= inf\{\|f+ g\|_{A}: r_{A}(g)= 0\}
$$
Put 
$$
\|f\|_{r, w}:= \|f+ Ker(r_{A})\|_{\frac{A}{I}}
$$
for $f\in L^{1}(G, w)$. Thus 
$(A, \|.\|_{r, w})$  is a commutative semisimple Banach algebra. As a result $ \|.\|_{r, w}$ is equivalent with $ \|.\|_{1, w}$; see [\cite{kan}, 2.1.11]. Clearly
$$
\|f\|_{ {r, w}}\leq \|f\|_{ {1, w}}
$$
and there exists some $M>0$ such that
$$
M \|f\|_{ {1, w}}\leq \|f\|_{ {r, w}}
$$
for all $f\in L^{1}(G, w)$. Let $E$ be an  algebra and $\mid.\mid: E\to \mathbb C$ be a submultiplicative norm on $E$ such that $\mid x^{2}\mid= \mid x\mid^{2}$, for all $x\in E$. Then $\mid .\mid$ is called uniform norm; see [\cite{kan}, 4.6.1].
Let $A$ be a commutative  Banach algebra and $\mid .\mid_{A}$ be a uniform norm on $A$. Then
$$
\mid a\mid_{A}\leq r_{A}(a):= \underset{n\to \infty}{lim}\|a^{n}\|^{\frac{1}{n}}
$$
for all $a\in A$. Define
$$
E:= \{ \varphi\in \Delta(A): \mid\varphi(x)\mid\leq \mid x\mid_{A}, ~ for ~ x\in A\} 
$$
for all $x\in A$; [\cite{kan}, 4.6.2].
Let $A$ be a commutative semisimple Banach algebra. Then $A$ admits a uniform norm and $x\mapsto r_{A}(x)$ is a uniform norm on $A$; see [\cite{kan}, 4.6.3].
Assume that $a\in A$, so 
$$
\mid a\mid_{A} \leq r_{A}(a) = \|\hat a\|_{\infty}
$$
for each uniform norm $\mid.\mid_{A}$.\\
The Banach algebra $A$ has the unique uniform norm property, if $ r_{A}$ is the only uniform norm on $A$.

  Let $\mid.\mid_{A}$ be a uniform norm on the algebra $A$ and $B$ be the compilation of $A$. Then 
\begin{align*}
r_{A}(a) &= r_{B}(b)\\
                           &= \underset{n\to\infty}{lim}\mid a^{2^{n}}\mid_{A}^{\frac{1}{2^{n}}}\\
                          &= \mid a\mid_{A}  
\end{align*}
for each $a\in A$.
 \begin{thm}[\cite{kan}, 4.7.3]\label{tnr}
Let $G$ be a locally compact Abelian group, and $w: G\to (0, \infty)$ be a Borel measurable weighted function. Then $ L^{1}(G, w)$ has the unique uniform norm property, if and only if 
$ L^{1}(G, w)$  is regular.
\end{thm}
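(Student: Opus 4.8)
The plan is to identify uniform norms on $A := (L^1(G,w),\star)$ with closed subsets of the character space $\Delta(A) = \widehat{G_w}$, and then to read the unique uniform norm property directly off the regularity condition. Recall first that $A$ is commutative and semisimple, so by Theorem~2.3 the Gelfand map $\Gamma_w : A \to C_0(\widehat{G_w})$ is injective and $r_A(f) = \|\hat f\|_\infty = \sup\{|\hat f(\chi)| : \chi \in \widehat{G_w}\}$. Given \emph{any} uniform norm $|\cdot|_A$ on $A$, let $B$ be the completion of $(A,|\cdot|_A)$; since $|x^2|_A = |x|_A^2$, the algebra $B$ is a commutative $C^*$-algebra, hence $B \cong C_0(X)$ with $X = \Delta(B)$. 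Restricting characters of $B$ to $A$ identifies $X$ with the closed set $E = \{\chi \in \widehat{G_w} : |\hat f(\chi)| \le |f|_A \text{ for all } f \in A\}$ introduced before the statement, and yields the basic formula
$$ |f|_A = \sup\{\,|\hat f(\chi)| : \chi \in E\,\} \qquad (f \in A). $$
Thus a uniform norm is completely determined by its support set $E \subseteq \widehat{G_w}$, and $|\cdot|_A = r_A$ precisely when $\sup_{E}|\hat f| = \sup_{\widehat{G_w}}|\hat f|$ for every $f$ (i.e. when $E$ contains the Shilov boundary of $A$).

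\emph{Regularity implies the unique uniform norm property.} Suppose $A$ is regular and let $|\cdot|_A$ be any uniform norm with support set $E$. If $E$ were a proper subset of $\widehat{G_w}$, choose $\chi_0 \in \widehat{G_w}\setminus E$; since $E$ is closed in the Gelfand topology, regularity furnishes $f \in A$ with $\hat f(\chi_0) = 1$ and $\hat f \equiv 0$ on $E$. Then $|f|_A = \sup_{E}|\hat f| = 0$, so $f = 0$ by semisimplicity, contradicting $\hat f(\chi_0) = 1$. Hence $E = \widehat{G_w}$ and $|f|_A = \sup_{\widehat{G_w}}|\hat f| = r_A(f)$ for all $f$. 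As $|\cdot|_A$ was arbitrary, $r_A$ is the only uniform norm on $A$.

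\emph{The unique uniform norm property implies regularity.} Here I would argue the contrapositive: if $A$ is not regular I produce a uniform norm different from $r_A$, that is, a proper closed set $E \subsetneq \widehat{G_w}$ which is still a uniqueness set ($k(E) := \{f : \hat f|_E = 0\} = \{0\}$) but on which $\sup_{E}|\hat f| < \sup_{\widehat{G_w}}|\hat f|$ for some $f$; by the correspondence above $f \mapsto \sup_{E}|\hat f|$ is then a uniform norm strictly dominated by $r_A$. To locate such an $E$ I would invoke Domar's theorem \cite{Do} together with the Tauberian result of this paper: since $L^1(G,w)$ is always Tauberian, it is regular exactly when $w$ is nonquasianalytic, so non-regularity of $A$ is equivalent to $w$ being quasianalytic. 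Quasianalyticity of $w$ forces the Gelfand transforms $\{\hat f : f \in A\}$ to form a quasianalytic class on $\widehat{G_w}$, so that no nonzero $f$ can have $\hat f$ vanish on a set carrying an interior; any proper closed $E$ with nonempty interior is therefore a uniqueness set not reaching the Shilov boundary, and the construction goes through.

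The main obstacle is precisely this last step. Deducing the quasianalytic-class behaviour of the transforms $\hat f$ from the scalar growth condition $\sum_{n} w(x^{n})/n^{2} = \infty$, and doing so uniformly for an arbitrary locally compact Abelian $G$ rather than only for $\mathbb Z$ or $\mathbb R$, is the technical heart of the matter and is exactly where Domar's localisation and uniqueness machinery is indispensable. By contrast, the two implications sketched above are essentially formal, resting only on semisimplicity, the regularity separation property, and the $C^*$-completion correspondence of the first paragraph.
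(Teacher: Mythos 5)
The paper offers no proof of this statement at all: it is quoted verbatim from Kaniuth [\cite{kan}, 4.7.3], so there is nothing internal to compare your argument against. Judged on its own, your first direction (regularity $\Rightarrow$ unique uniform norm property) is correct and essentially the standard argument: the completion of $(A,|\cdot|_A)$ is a uniform Banach algebra, its Gelfand transform is isometric, restriction of its characters identifies $\Delta(B)$ with the closed set $E$, whence $|f|_A=\sup_E|\hat f|$; regularity plus semisimplicity then forces $E=\widehat{G_w}$ and $|\cdot|_A=r_A$. That half is fine.

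The converse, however, contains a genuine gap, and you have in effect flagged it yourself. To produce a second uniform norm you need a proper closed set $E\subsetneq\widehat{G_w}$ that is simultaneously a set of uniqueness ($\hat f|_E=0\Rightarrow f=0$, so that $\sup_E|\hat f|$ is a norm rather than a seminorm) and not a boundary (so that $\sup_E|\hat f|<r_A(f)$ for some $f$). Your assertion that quasianalyticity of $w$ makes ``any proper closed $E$ with nonempty interior'' such a set is not justified and is not even correctly formulated: a proper closed set with nonempty interior may well contain the Shilov boundary, in which case $\sup_E|\hat f|=r_A(f)$ for all $f$ and no new norm is obtained. The actual content of [\cite{kan}, 4.7.3] is precisely the construction you skip: starting from a single element $x_0$ at which $\sum_n \log w(x_0^n)/(1+n^2)=\infty$, one must transfer the classical one-variable quasianalytic uniqueness theorem (for $\ell^1(\mathbb Z,\omega)$ on an arc of $\mathbb T$, say) into a uniqueness-but-not-boundary set inside $\widehat{G_w}$ for a general locally compact Abelian $G$. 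Without that step the implication ``unique uniform norm property $\Rightarrow$ regular'' is not proved; note also that the general implication fails for arbitrary commutative semisimple Banach algebras, so the specific Beurling-algebra structure cannot be avoided here. Finally, a small mismatch: the reduction to ``$w$ quasianalytic'' via Theorem \ref{tln} is only available for $w\ge 1$, whereas the statement you are proving allows $w:G\to(0,\infty)$.
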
 
In the following Lemma, the correlation between $r_{ L^{1}(G, w)}$ and $r_{ L^{1}(G)}$ are assessed.
\begin{lem}
Let $G$ be a locally compact Abelian group,  $w: G\to (0, \infty)$ be a Borel measurable weight function, and $f: G\to \mathbb C$ be a Borel measurable function with compact support. If $f\in L^{1}(G, w)\cap L^{1}(G)$,
then there exist $M>0$  such that 
$$
\frac{1}{M}r_{ L^{1}(G)}(f)\leq r_{ L^{1}(G, w)}\leq Mr_{ L^{1}(G)}(f)
$$
\end{lem}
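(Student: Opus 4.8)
The plan is to reduce the statement to a direct comparison of the convolution power norms $\|f^{\star n}\|_{1,w}$ and $\|f^{\star n}\|_{1}$, where $f^{\star n}$ denotes the $n$-fold convolution power, and then to take $n$-th roots and pass to the limit. Write $K:=\mathrm{supp}(f)$, a compact subset of $G$, and recall that $f^{\star n}$ is supported in the product set $K^{n}:=\{x_{1}\cdots x_{n}: x_{i}\in K\}$, since $f^{\star n}(x)\neq 0$ forces $x$ to be a product of $n$ elements of $K$. The whole argument then rests on controlling $w$ from above and below on $K^{n}$, uniformly enough in $n$ that the exponential rate is the same in both algebras.

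First I would establish a two-sided bound for $w$ on $K^{n}$. Invoking (local) boundedness of $w$ on the compact sets $K$ and $K^{-1}$, set $C_{1}:=\mathrm{ess\,sup}_{x\in K} w(x)$ and $C_{2}:=\mathrm{ess\,sup}_{x\in K^{-1}} w(x)$; both are finite. If $x=x_{1}\cdots x_{n}$ with each $x_{i}\in K$, submultiplicativity $w(xy)\le w(x)w(y)$ gives $w(x)\le \prod_{i}w(x_{i})\le C_{1}^{\,n}$, so $w\le C_{1}^{\,n}$ on $K^{n}$. For the lower bound, note $w(e)\ge 1$ (from $w(e)\le w(e)^{2}$), hence $1\le w(e)=w(xx^{-1})\le w(x)w(x^{-1})$, so $w(x)\ge w(x^{-1})^{-1}$; since $x\in K^{n}$ implies $x^{-1}\in (K^{-1})^{n}$ and therefore $w(x^{-1})\le C_{2}^{\,n}$, we obtain $w\ge C_{2}^{-n}$ on $K^{n}$.

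Next I would feed these bounds into the norms. Because $f^{\star n}$ vanishes off $K^{n}$,
$$
C_{2}^{-n}\,\|f^{\star n}\|_{1}\ \le\ \|f^{\star n}\|_{1,w}\ \le\ C_{1}^{\,n}\,\|f^{\star n}\|_{1}.
$$
Taking $n$-th roots and letting $n\to\infty$, the outer sequences converge to $C_{2}^{-1}\,r_{L^{1}(G)}(f)$ and $C_{1}\,r_{L^{1}(G)}(f)$ while the middle converges to $r_{L^{1}(G,w)}(f)$, whence
$$
C_{2}^{-1}\,r_{L^{1}(G)}(f)\ \le\ r_{L^{1}(G,w)}(f)\ \le\ C_{1}\,r_{L^{1}(G)}(f).
$$
Choosing $M:=\max\{C_{1},C_{2}\}$ yields the asserted inequalities.

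The only real obstacle is the finiteness of $C_{1}$ and $C_{2}$: a merely Borel measurable submultiplicative weight need not be bounded on a compact set, so the single constant $M$ genuinely requires boundedness of $w$ on $K\cup K^{-1}$, which I would invoke as a standing assumption on the weight (local boundedness on compact sets). Everything else is the elementary observation that passing from $L^{1}(G)$ to $L^{1}(G,w)$ alters each convolution power norm by at most an $n$-th power of a fixed constant, so the exponential growth rate, and hence the spectral radius, is unchanged up to the multiplicative factor $M$.
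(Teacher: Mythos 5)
Your proof is correct and follows essentially the same route as the paper's: bound $w$ between $C^{-n}$ and $C^{n}$ on $K^{n}$ via submultiplicativity and the inequality $w(e)\le w(x)w(x^{-1})$, sandwich $\|f^{\star n}\|_{1,w}$ between constant multiples of $\|f^{\star n}\|_{1}$, and take $n$-th roots. The paper likewise obtains the finiteness of your constants $C_{1},C_{2}$ by citing local boundedness of the weight on compact sets ([Kaniuth, 1.3.3]), so your flagged ``standing assumption'' is exactly the ingredient the paper invokes.
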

\begin{proof}
If $K:= supp(f)$, then $ supp(f^{n})\subseteq K^{n}$ is compact. Due to [\cite{kan}, 1.3.3], $w$  is bounded on $K$. Assume that 
$$
M:= sup\{w(x): x\in K\cup K^{-1}\}
$$
Then for all $x\in G$, we have 
$$
w(e)\leq w(x)w(x^{-1})
$$
and so
$$
w(x)\geq\frac{w(e)}{w(x^{-1})}
$$
If $x= x_{1}x_{2}\cdots x_{n}\in K^{n}$, then $x^{-1}= x_{1}^{-1}x_{2}^{-1}\cdots x_{n}^{-1}$ and so 
$$
w(x)\leq  w(x_{1})\cdots w(x_{n})\leq M^{n}
$$
and 
$$
w(x^{-1})\leq  w(x_{1}^{-1})\cdots w(x_{n}^{-1})\leq M^{n}
$$
for all $n\in\mathbb N$. As a result 
$$
\frac{w(e)}{M^{n}}\leq w(x)\leq M^{n}
$$
for all $x\in K^{n}$. Therefore 
\begin{align*}
\frac{w(e)}{M^{n}}\|f^{n}\|_{1} &\leq \|f^{n}\|_{1,w}\\
                                                        &= \int_{G}\mid f^{n}(x)\mid w(x)dx\\
                                                        &= \int_{K^{n}}\mid f^{n}(x)\mid w(x)dx\leq M^{n}\|f^{n}\|_{1}
\end{align*}
As a result 
\begin{align*}
\frac{w(e)^{\frac{1}{n}}}{M}\|f^{n}\|_{1}^{\frac{1}{n}} \leq \|f^{n}\|_{1, w}^{\frac{1}{n}}\leq M\|f^{n}\|_{1}^{\frac{1}{n}}
\end{align*}
Therefore
$$
\frac{1}{M}r_{L^{1}(G)}(f)\leq r_{L^{1}(G, w)}(f)\leq M r_{L^{1}(G)}(f).
$$
\end{proof}
 For the definition of $hk$- topology, we refer the reader to \cite{kan}.
\begin{thm}
Let $G$ be a locally compact Abelian group, and $w: G\to [1, \infty)$ be a Borel measurable weighted function such that
 $w$ is a non- quasianalytic weight function. Then\\
(i) 
$$
w^{*}-cl(\Delta(L^{1}(G, w)))= \Delta(M(G, w)).
$$
(ii)
$$
w^{*}-cl(L^{1}(G, w))= M(G, w).
$$
\end{thm}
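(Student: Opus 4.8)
The plan is to establish (ii) first by a duality (bipolar) argument and then to deduce (i) from the ideal structure of $L^1(G,w)$ in $M(G,w)$ together with the regularity furnished by non-quasianalyticity.

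For (ii), recall from the discussion preceding the theorem that $M(G,w)=C_0(G,\frac1w)^{*}$, so the weak-star topology on $M(G,w)$ is $\sigma(M(G,w),C_0(G,\frac1w))$, and $L^1(G,w)$ sits inside $M(G,w)$ as the absolutely continuous measures $f\,dx$ under the pairing $\langle f,\phi\rangle=\int_G f\phi\,dx$. By the bipolar theorem the weak-star closure of the subspace $L^1(G,w)$ equals the annihilator of its pre-annihilator, so it suffices to show that the pre-annihilator of $L^1(G,w)$ in $C_0(G,\frac1w)$ is $\{0\}$. If $\phi\in C_0(G,\frac1w)$ satisfies $\int_G f\phi\,dx=0$ for every $f\in L^1(G,w)$, then, writing an arbitrary $g\in L^1(G)$ as $g=fw$ with $f\in L^1(G,w)$, we obtain $\int_G g\,\frac{\phi}{w}\,dx=0$ for all $g\in L^1(G)$; hence $\frac{\phi}{w}=0$ almost everywhere, and since $\frac{\phi}{w}\in C_0(G)$ is continuous, $\phi=0$. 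This proves (ii).

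For (i), first note that $M(G,w)$ is unital with identity $\delta_e$, so $\Delta(M(G,w))$ is weak-star compact; since $L^1(G,w)$ is a closed ideal of $M(G,w)$, restriction identifies $\Delta(L^1(G,w))=\widehat{G_w}$ with the open subset $\{\varphi\in\Delta(M(G,w)):\varphi|_{L^1(G,w)}\neq0\}$, whose weak-star closure is automatically contained in the compact set $\Delta(M(G,w))$; this is the inclusion $\subseteq$. The heart of the matter is the reverse inclusion, and the cleanest way to approach density is to compute the kernel $k(\widehat{G_w})=\{\mu\in M(G,w):\hat\mu(\chi)=0\text{ for all }\chi\in\widehat{G_w}\}$. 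If $\mu\in k(\widehat{G_w})$, then for every $f\in L^1(G,w)$ one has $\mu\star f\in L^1(G,w)$ and $\widehat{\mu\star f}(\chi)=\hat\mu(\chi)\hat f(\chi)=0$ on all of $\Delta(L^1(G,w))$, so $\mu\star f=0$ by semisimplicity of $L^1(G,w)$; applying this to a bounded approximate identity $(e_\alpha)$ of $L^1(G,w)$ and using $\mu\star e_\alpha\to\mu$ weak-star yields $\mu=0$. Thus $k(\widehat{G_w})=\{0\}$, which says exactly that $\widehat{G_w}$ is dense in $\Delta(M(G,w))$ for the hull-kernel topology, since its hull-kernel closure is the hull of the zero ideal, namely all of $\Delta(M(G,w))$.

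It remains to upgrade hull-kernel density to weak-star density, and this is where I expect the main obstacle and where non-quasianalyticity enters. Identifying a character $\varphi\in\Delta(M(G,w))$ with the $w$-dominated semicharacter $\chi_\varphi(x)=\varphi(\delta_x)$, the weak-star topology restricts on these semicharacters to a topology finer than pointwise convergence on $G$; density of the continuous semicharacters $\widehat{G_w}$ for pointwise convergence is the weighted analogue of the density of $\widehat{G}$ in the Bohr compactification, and the supply of continuous semicharacters needed for it is guaranteed precisely by the non-quasianalyticity of $w$, equivalently by the regularity of $L^1(G,w)$ via Domar's theorem \cite{Do} and Theorem \ref{tnr}. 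To pass from pointwise to weak-star convergence one integrates against an arbitrary $\mu\in M(G,w)$; here the uniform domination $|\chi_i|\le w$ together with $\int_G w\,d|\mu|=\|\mu\|_w<\infty$ makes a dominated-convergence argument available, the delicate point being the replacement of nets by subsequences or the verification of uniform integrability so that $\int_G\chi_i\,d\mu\to\int_G\chi_\varphi\,d\mu$. Reconciling the hull-kernel density with the strictly finer weak-star topology in this way, equivalently handling the characters that are singular with respect to $L^1(G,w)$, is the step I expect to require the most care, and it is exactly there that regularity (non-quasianalyticity) must be used.
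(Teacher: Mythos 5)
Your part (ii) is fine, and is in fact more complete than the paper's own argument, which merely asserts that the weighted case follows from $w^{*}\text{-}cl(L^{1}(G))=M(G)$; your bipolar/pre-annihilator computation is a legitimate self-contained substitute. Likewise, your kernel computation $k(\widehat{G_{w}})=\{0\}$ in $M(G,w)$ (via semisimplicity of $L^{1}(G,w)$ and the bounded approximate identity) is essentially a direct proof of the fact the paper imports as [Kaniuth, 4.1.10], namely $hk\text{-}cl(\Delta(A))=\Delta(M(A))$ for $A=L^{1}(G,w)$. Up to that point you are following the same route as the paper.

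The genuine gap is the last step of (i), and you have correctly located it but not closed it. Hull-kernel density does not imply weak-star density: the hull-kernel topology on $\Delta(M(G,w))$ is coarser than the Gelfand topology, so $hk\text{-}cl(\widehat{G_{w}})=\Delta(M(G,w))$ gives no upper bound on the $w^{*}$-closure. What is needed is that the two topologies coincide on $\Delta(M(G,w))$, or at least that the $w^{*}$-closure of $\widehat{G_{w}}$ is hull-kernel closed. Regularity of $L^{1}(G,w)$ (from non-quasianalyticity) only yields $\tau_{hk}=\tau_{w^{*}}$ on $\Delta(L^{1}(G,w))$, not on the strictly larger spectrum $\Delta(M(G,w))$; the paper's proof makes exactly this identification and simply writes $\tau_{hk}=\tau_{w^{*}}$ "since $A$ is regular", so it does not supply the missing argument either. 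Your proposed repair does not work as stated: weak-star convergence of characters of $M(G,w)$ means convergence of $\int_{G}\chi_{i}\,d\mu$ for \emph{every} $\mu\in M(G,w)$, including singular measures, and this is not controlled by pointwise convergence of the semicharacters $\chi_{i}$ on $G$ plus domination by $w$ --- dominated convergence is unavailable for nets, and one cannot in general pass to subsequences in $\Delta(M(G,w))$. Indeed for $w\equiv 1$ and $G$ non-discrete this is precisely the Wiener--Pitt phenomenon: $\widehat{G}$ is hull-kernel dense but not Gelfand dense in $\Delta(M(G))$, which shows that the passage from $hk$-density to $w^{*}$-density cannot be a soft topological upgrade and must use something beyond regularity of the ideal $L^{1}(G,w)$. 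You should either restrict the closure to be taken inside $hk\text{-}cl$, supply a proof that $M(G,w)$ itself is regular under the stated hypotheses, or flag the claim as requiring a different formulation.
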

\begin{proof}
(i) Since $L^{1}(G, w)$ is a commutative semisimple Banach algebra with bounded approximate identity and  $L^{1}(G, w)$  is a closed ideal of 
$$
M(G, w)= { M}(L^{1}(G, w))
$$
then by applying [\cite{kan}, 4.1.10], we have
$$
hk- cl(\Delta(A))= \Delta(M(A))
$$
where $A= L^{1}(G, w)$. Since $A$ is regular, so $\tau_{hk}= \tau_{w^{*}}$. As a result due to \cite{DL} we have
\begin{align*}
w^{*}-cl (\widehat{G_{w}}) &= hk-cl (\Delta(L^{1}(G, w)))\\
                                               &= \Delta(M(L^{1}(G, w))= \Delta(M(G, w)).
\end{align*}
Where
$$
 M(L^{1}(G, w)):= \{\sigma\in C_{b}(\Delta(A): ~ \sigma. \hat A\subseteq \hat A\}.
$$
(ii) Since  $L^{1}(G, w)$ is a closed ideal of 
$$
M(G, w)= C_{0}(G,\frac{1}{w})^{*}
$$
and 
$$ 
w^{*}-cl(L^{1}(G))= M(G)
$$
So 
$$
w^{*}-cl(L^{1}(G, w))= M(G, w).
$$
\end{proof}

\begin{lem}\label{l3}
Let $G$ be a locally compact Abelian group, and $w: G\to (0, \infty)$ be a Borel measurable weighted function. Then \\
(i) 
If $A= l_{1}(G,w)$, then
$$
r_{A}(x)= \underset{n\to\infty}{lim}w(x^{n})^{\frac{1}{n}}:= r_{w}(x)
$$
for all $x\in G$.\\
(ii)
If $\chi\in\widehat{G_{w}}$, then 
$$
{r_{w}(x^{-1})}^{-1}\leq \mid\chi(x)\mid\leq r_{w}(x)
$$
for all $x\in G$.\\
(iii)
$$
0<\mid\chi(x)\mid\leq w(x)
$$
for all $x\in G$.
\end{lem}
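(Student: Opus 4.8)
The plan is to establish the three parts in order, with (ii) and (iii) extracted as consequences of (i). Throughout I identify a group element $x\in G$ with the point mass $\delta_x\in\ell_1(G,w)$, noting that $\|\delta_x\|_{1,w}=w(x)$.

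For part (i) I would first record that in the convolution algebra $A=\ell_1(G,w)$ point masses multiply by the group law, $\delta_x\star\delta_y=\delta_{xy}$, so the $n$-th convolution power of $x=\delta_x$ is $\delta_{x^n}$ and hence $\|x^n\|_{1,w}=w(x^n)$. Inserting this into the spectral-radius formula $r_A(x)=\lim_{n\to\infty}\|x^n\|_{1,w}^{1/n}$ recalled at the start of Section 3 yields at once $r_A(x)=\lim_{n\to\infty}w(x^n)^{1/n}=r_w(x)$. The existence of the limit is not a separate concern: the standing submultiplicativity $w(x^{n+m})\le w(x^n)w(x^m)$ makes $n\mapsto\log w(x^n)$ subadditive, so Fekete's lemma gives $\lim_n w(x^n)^{1/n}=\inf_n w(x^n)^{1/n}$, matching the $\inf=\lim$ description of $r_A$ already stated in the excerpt.

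For part (ii) the upper bound is precisely the fact that a character is dominated by the spectral radius. By the character correspondence of Section 2, each $\chi\in\widehat{G_w}$ induces a character $\Phi_\chi\in\Delta(A)$, and since $\Phi_\chi(a)$ lies in the spectrum of $a$ we have $|\Phi_\chi(a)|\le r_A(a)$ for every $a\in A$; taking $a=\delta_x$ and applying part (i) gives $|\chi(x)|=|\Phi_\chi(\delta_x)|\le r_A(\delta_x)=r_w(x)$. For the lower bound I would invoke multiplicativity: a nonzero $\chi$ forces $\chi(e)=1$ and $\chi(x)\chi(x^{-1})=\chi(e)=1$, whence $\chi(x)\ne 0$ and $|\chi(x)|=|\chi(x^{-1})|^{-1}$; combining this with the upper bound applied to $x^{-1}$, namely $|\chi(x^{-1})|\le r_w(x^{-1})$, produces $|\chi(x)|\ge r_w(x^{-1})^{-1}$.

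Part (iii) then drops out of the preceding two. Strict positivity $|\chi(x)|>0$ is exactly the relation $\chi(x)\chi(x^{-1})=1$ used above. For the upper bound $|\chi(x)|\le w(x)$ I would chain the estimate of part (ii) with property (v) of $r_A$ from Section 3: since $r_A(\delta_x)\le\|\delta_x\|_{1,w}=w(x)$, we obtain $|\chi(x)|\le r_w(x)\le w(x)$. The only steps requiring any care are the standard spectral inequality $|\varphi(a)|\le r_A(a)$ for a character $\varphi$, and keeping the group-algebra conventions consistent so that $|\Phi_\chi(\delta_x)|$ really equals $|\chi(x)|$; neither is a genuine obstacle, and I expect the whole argument to be short.
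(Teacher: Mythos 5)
Your proof is correct and follows essentially the same route as the paper: part (i) via $\delta_x^n=\delta_{x^n}$ and the spectral radius formula, part (ii) by bounding $|\chi(x)|$ above by $r_w(x)$ and then applying that bound to $x^{-1}$ together with $\chi(x)\chi(x^{-1})=1$, and part (iii) from submultiplicativity $w(x^n)\le w(x)^n$. The only cosmetic difference is that for the upper bound in (ii) you invoke the abstract inequality $|\varphi(a)|\le r_A(a)$ for characters, whereas the paper computes directly from $|\chi(x)|^n=|\chi(x^n)|\le Mw(x^n)$ and takes $n$-th roots; these are the same estimate, and your version of the lower bound even states the final inequality with the correct direction, which the paper's last displayed line has reversed by a typo.
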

\begin{proof}
(i) If $x\in G$, then $\delta_{x}\in A$ and 
$$
\|\delta_{x}\|_{1, w}= w(x)
$$
Therefore by applying [\cite{kan}, 1.2.5] we have
\begin{align*}
r_{A}(x) &= \underset{n\to\infty}{lim}\|\delta_{x}^{n}\|_{A}^{\frac{1}{n}}\\
                &= \underset{n\to\infty}{lim}\|\delta_{x^{n}}\|_{1, w}^{\frac{1}{n}}\\
                &= \underset{n\to\infty}{lim} w(x^{n})^{\frac{1}{n}}
\end{align*}
(ii) If $\chi\in \widehat{G_{w}}$, then there exists $M>0$ such that 
$$
\mid\chi(x)\mid\leq M w(x)
$$
for all $x\in G$. This implies that 
$$
\mid\chi(x)\mid^{n} = \mid\chi(x^{n})\mid \leq Mw(x^{n})
$$
for all $n\in \mathbb N$ and so
 $$
\mid\chi(x)\mid\leq M^{\frac{1}{n}}w(x^{n})^{\frac{1}{n}}
$$
As a result
$$
\mid\chi(x)\mid\leq  r_{w}(x)
$$
Moreover
$$
\frac{1}{\mid\chi(x)\mid}= \mid\chi(x^{-1})\mid\leq r_{w}(x^{-1})
$$
Therefore 
$$
\mid\chi(x)\mid\leq  r_{w}(x^{-1})^{-1}
$$
for all $x\in G$.\\
(iii)
$$
w(x^{n})\leq w(x)^{n}
$$
for all $n\in \mathbb N$.  Therefore
$$
r_{w}(x):= \underset{n\to\infty}{lim}w(x^{n})^{\frac{1}{n}}\leq w(x).
$$
\end{proof}

\begin{lem}\label{gsm}
Let $G$ be a locally compact Abelian group, and $w: G\to (0, \infty)$ be a Borel measurable weighted function. Then\\
(i)
$(\widehat{G_{w}}, \underset{w}{.})$ is semigroup if and only if $w$ is multiplicative.\\
(ii) $(\widehat{G_{w}}, \underset{w}{.})$ is group if and only if $w\equiv1$.\\
(iii) $ \widehat{G_{w}}= \widehat G$ if and only if 
$
\underset{n\to\infty}{lim}w(x^{n})=1
$
for all $x\in G$.
\end{lem}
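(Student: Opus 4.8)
The plan is to handle the three parts in increasing order of difficulty, using throughout that $\widehat{G_w}=\Delta(L^{1}(G,w))$ is non-empty (by semisimplicity) and that every $\chi\in\widehat{G_w}$ is nowhere zero with $|\chi(x)|\le M_\chi\, w(x)$, by Lemma \ref{l3}(iii) and the defining bound $\chi/w\in L^\infty$. Associativity of $\underset{w}{.}$ comes for free: as functions on $G$ both bracketings of $(\chi_1\underset{w}{.}\chi_2)\underset{w}{.}\chi_3$ equal $\chi_1\chi_2\chi_3/w^2$. Hence in each part the genuine content is closure under $\underset{w}{.}$, and for (ii) the additional existence of an identity and of inverses.

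For (i), I would compute $(\chi_1\underset{w}{.}\chi_2)(xy)$ and compare it with $(\chi_1\underset{w}{.}\chi_2)(x)\,(\chi_1\underset{w}{.}\chi_2)(y)$; the two differ precisely by the factor $w(x)w(y)/w(xy)$ multiplying the nowhere-zero quantity $\chi_1(x)\chi_1(y)\chi_2(x)\chi_2(y)$. Thus $\chi_1\underset{w}{.}\chi_2$ is again multiplicative for all choices iff $w(xy)=w(x)w(y)$, and when $w$ is multiplicative the ratio bound $|\chi_1\underset{w}{.}\chi_2|\le M_{\chi_1}M_{\chi_2}\,w$ is automatic. So $\widehat{G_w}$ is closed under $\underset{w}{.}$ iff $w$ is multiplicative; since $\widehat{G_w}\neq\emptyset$, a single element (take $\chi_1=\chi_2=\chi$) already forces multiplicativity in the forward direction.

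For (ii), the easy implication is $w\equiv 1$: then $\underset{w}{.}$ is ordinary pointwise product, any measurable multiplicative $\chi$ with $\chi/w=\chi$ bounded satisfies $|\chi(x)|\equiv 1$ (otherwise $|\chi(x)|^{n}=|\chi(x^{n})|$ is unbounded for $x$ or for $x^{-1}$), so $\widehat{G_w}=\widehat G$ is the Pontryagin dual and a group. For the converse I would first invoke (i): a group is in particular a semigroup, so $w$ is multiplicative; then $w(e)=w(e)^2$ gives $w(e)=1$, and $w(x)w(x^{-1})=w(e)=1$. Invoking the normalization $w\ge 1$ of the submultiplicative weight, $w(x)\ge 1$ and $w(x^{-1})\ge 1$ together with $w(x)w(x^{-1})=1$ force $w(x)=1$ for every $x$. (This normalization is genuinely needed, since for a multiplicative weight not bounded below the set $\widehat{G_w}$ is again a group without $w$ being trivial.)

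Part (iii) is where I expect the main obstacle. One inclusion is routine: each unitary $\gamma\in\widehat G$ has $\gamma/w=1/w$ bounded (as $w\ge 1$), so $\widehat G\subseteq\widehat{G_w}$ always. The reverse inclusion is controlled by Lemma \ref{l3}(ii): for $\chi\in\widehat{G_w}$ one has $r_w(x^{-1})^{-1}\le|\chi(x)|\le r_w(x)$ with $r_w(x)=\lim_n w(x^n)^{1/n}$, so if $\lim_n w(x^n)=1$ for all $x$ then $r_w(x)=1$, whence $|\chi(x)|\equiv 1$ and $\widehat{G_w}\subseteq\widehat G$, giving equality. The delicate direction is recovering $\lim_n w(x^n)=1$ from $\widehat{G_w}=\widehat G$. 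The natural route is to exhibit, for each $x$, a character in $\widehat{G_w}$ attaining the spectral radius (the analogue of the extremal positive character $n\mapsto r_w(x)^{n}$ in the $\ell^{1}(\mathbb Z,w)$ model); its membership in $\widehat G$ forces $|\chi(x)|=1$, hence $r_w(x)=1$. Constructing that extremal character on a general locally compact abelian $G$, and tightening the spectral-radius identity $r_w\equiv 1$ to the stronger pointwise limit $\lim_n w(x^n)=1$, is the step I expect to require the most care.
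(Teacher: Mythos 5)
Your part (i) is correct and is essentially the paper's argument: the identity $w(xy)=w(x)w(y)$ is extracted from the multiplicativity of $\chi\underset{w}{.}\chi$ for a single $\chi\in\widehat{G_{w}}$, using that $\chi$ is nowhere zero (Lemma \ref{l3}(iii)); your remarks on associativity and on the bound $|\chi_{1}\underset{w}{.}\chi_{2}|\le M_{\chi_{1}}M_{\chi_{2}}\,w$ supply the converse that the paper calls obvious. In part (ii) you take a genuinely different route for the hard direction. The paper deduces $|\chi(x)|=|\chi^{-1}(x)|=w(x)$ from Lemma \ref{l3}(ii) and then asserts $w(x)^{2}=1$; but the inverse relation $\chi\underset{w}{.}\chi^{-1}=w$ only yields $|\chi(x)|\,|\chi^{-1}(x)|=w(x)^{2}$, which is a tautology, so that step does not close. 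Your argument ($w$ multiplicative $\Rightarrow w(e)=1\Rightarrow w(x)w(x^{-1})=1$, then $w\ge 1$ forces $w\equiv 1$) is the one that can be made rigorous, and your observation that a lower normalization is genuinely needed is correct: for $G=\mathbb Z$ and $w(n)=2^{n}$ one has $\widehat{G_{w}}=\{n\mapsto z^{n}:|z|=2\}$, a group under $\underset{w}{.}$ with identity $w$, so the statement fails for general $w:G\to(0,\infty)$. You should say explicitly that you are strengthening the hypothesis to $w\ge 1$, since the lemma as stated does not assume it.

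Part (iii) is where your proposal has a genuine gap, and you have correctly located it. The implication $\lim_{n}w(x^{n})=1\Rightarrow r_{w}(x)=1\Rightarrow|\chi|\equiv 1\Rightarrow\widehat{G_{w}}\subseteq\widehat G$ is fine (and $\widehat G\subseteq\widehat{G_{w}}$ again uses $w\ge 1$), but the forward direction is only a plan: you would need, for each $x$ with $r_{w}(x)>1$, an element of $\widehat{G_{w}}$ attaining $|\chi(x)|=r_{w}(x)$, and you do not construct it. Your second worry is also well founded and worth making explicit: the condition as printed, $\lim_{n}w(x^{n})=1$, is strictly stronger than $r_{w}(x)=\lim_{n}w(x^{n})^{\frac{1}{n}}=1$ and is refuted by the paper's own closing example ($G=\mathbb Z$, $w(n)=1+|n|$, where $\widehat{G_{w}}=\mathbb T=\widehat{\mathbb Z}$ yet $w(x^{n})\to\infty$); the version actually used later, in Theorem \ref{thrd}, carries the exponent $\frac{1}{n}$. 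For comparison, the paper's proof of this direction derives only $r_{w}(x^{-1})^{-1}\le 1\le r_{w}(x)$ from $|\chi|\equiv 1$ and then asserts $r_{w}(x)=1$, so the extremal-character step you identify as the crux is absent there as well; completing it (for instance via the description of $\Delta(L^{1}(G,w))$ in terms of the bounds $r_{w}(x^{-1})^{-1}\le|\chi(x)|\le r_{w}(x)$ and a Hahn--Banach or $\ell^{1}(\mathbb Z,w)$-restriction argument on the closed subgroup generated by $x$) is what a full proof requires.
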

\begin{proof}
(i)
If $\chi\in \widehat{G_{w}}$, then $\chi$ is multiplicative and $\chi  \underset{w}{.} \chi\in \widehat{G_{w}}$, then
\begin{align*}
\chi  \underset{w}{.} \chi(xy) &= \frac{\chi(xy)^{2}}{w(xy)}\\
                                                &=  \frac{\chi(x)^{2}\chi(y)^{2}}{w(xy)}
\end{align*}
Moreover 
\begin{align*}
\chi  \underset{w}{.} \chi(xy) &= \chi  \underset{w}{.} \chi(x)\chi  \underset{w}{.} \chi(y)\\
                                               &=  \frac{\chi(x)^{2}\chi(y)^{2}}{w(x)w(y)}
\end{align*}
for $x,y \in G$. In the other hand, according to part (iii) of Lemma \ref{l3}  $\chi$ is nonzero. This implies that
 $w(xy)= w(x)w(y)$. As a result, $w$  is multiplicative.\\
 Conversely, it is obvious.\\
(ii) If $\widehat{G_{w}}$  is group, then $\widehat{G_{w}}$ is semigroup and so by applying part (i), $w$ is multiplicative. Thus $w\in \widehat{G_{w}}$ is identity. Assume that $\chi\in \widehat{G_{w}}$, so there exists $\chi^{-1}\in \widehat{G_{w}}$
such that 
$$
\chi  \underset{w}{.} \chi^{-1}= w
$$
In another hand
$$
r_{w}(x^{-1})^{-1}\leq \mid\chi(x)\mid\leq r_{w}(x)
$$
for all $x\in G$. Since $w$ is multiplicative, therefore 
$$ 
r_{w}(x)= w(x)
$$
and 
$$
r_{w}(x^{-1})= \frac{1}{w(x)}
$$
then
$$
\mid\chi(x)\mid= w(x)
$$
and 
$$
w(x)= \mid\chi^{-1}(x)\mid
$$
As a result $w^{2}(x)=1$ and so $w(x)= 1$, for all $x\in G$.

   Conversely, it is obvious.\\
(iii) If $\chi\in \widehat{G_{w}}$, then 
$$
r_{w}(x^{-1})^{-1}\leq \mid\chi(x)\mid\leq r_{w}(x)
$$
for all $x\in G$. Since $\mid\chi(x)\mid= 1$, so 
$$
r_{w}(x^{-1})^{-1}\leq 1\leq r_{w}(x)
$$
Thus
$$
\underset{n\to\infty}{lim}w(x^{n})^{\frac{1}{n}}= r_{w}(x)=1
$$
Conversely,  by applying [\cite{kan}, 2.8.3] it is proved.
\end{proof}
Let $G$ be an Abelian group. Then $L^{1}(G)$ is regular; see[\cite{kan}, 4. 4.14].
In the following Theorem, by a similar argument, we show that $L^{1}(G, w)$  is regular.
\begin{thm}\label{l1re}
Let $G$ be a locally compact Abelian group, and $w: G\to [1, \infty)$ be a Borel measurable weighted function such that $w(x^{n})^{\frac{1}{n}}= 1$. Then  $ L^{1}(G, w)$ is regular.
\end{thm}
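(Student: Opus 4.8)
The plan is to reduce everything to the known regularity of $L^{1}(G)$ by first checking that the two algebras share the same spectrum, both as a topological space and with the same transform, and then transcribing the separation argument behind [\cite{kan}, 4.4.14]. Reading the hypothesis as $r_{w}(x)=\lim_{n}w(x^{n})^{\frac{1}{n}}=1$ for every $x\in G$, Lemma~\ref{gsm}(iii) gives $\widehat{G_{w}}=\widehat G$. Combined with the identification $\Delta(L^{1}(G,w))\cong\widehat{G_{w}}$ proved earlier, the spectrum of $L^{1}(G,w)$ is $\widehat G$ and the Gelfand transform of $f\in L^{1}(G,w)$ is its Fourier transform $\hat f(\chi)=\int_{G}\overline{\chi(x)}f(x)\,dx$ restricted to $\widehat G$. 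Since $w\ge 1$ we have continuous inclusions $C_{c}(G)\subseteq L^{1}(G,w)\subseteq L^{1}(G)$ with $C_{c}(G)$ dense in each (here $w$ is locally bounded by [\cite{kan}, 1.3.3]); as the Fourier transforms of $C_{c}(G)$-functions are uniformly dense in $\{\hat g:g\in L^{1}(G)\}$, the weak topologies that these three families induce on $\widehat G$ all coincide. Hence $\Delta(L^{1}(G,w))=\widehat G=\Delta(L^{1}(G))$ carry the same Gelfand topology.

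Next I would recast regularity as a bump-function statement. Fix a Gelfand-closed set $K\subseteq\widehat G$ and a character $\gamma_{0}\in\widehat G\setminus K$. Because $\widehat G$ is locally compact Hausdorff, choose a relatively compact open $U$ with $\gamma_{0}\in U$ and $\overline U\cap K=\emptyset$; it then suffices to produce $f\in L^{1}(G,w)$ whose transform equals $1$ on a neighborhood of $\gamma_{0}$ and vanishes off $U$, since after normalization such an $f$ satisfies $\hat f(\gamma_{0})=1$ and $\hat f|_{K}=0$.

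Finally I would build $f$ exactly as in the unweighted case. Regularity of $L^{1}(G)$ furnishes such a transform inside the Fourier algebra of $\widehat G$, realized concretely as $\hat f=F_{1}*F_{2}$ with $F_{1},F_{2}\in L^{2}(\widehat G)$ chosen so that $F_{1}*F_{2}$ equals $1$ near $\gamma_{0}$ and is supported in $U$; then $f=\phi\psi$, where $\phi,\psi\in L^{2}(G)$ are the inverse transforms of $F_{1},F_{2}$, lies in $L^{1}(G)$ and has the required transform. The one point that is not automatic, and which I expect to be the main obstacle, is that this $f$ must actually belong to $L^{1}(G,w)$, i.e. $\int_{G}|\phi\psi|\,w<\infty$. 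I would address it by taking $F_{1},F_{2}$ as smoothly tapered bumps so that $\phi,\psi$ decay as rapidly as the structure of $G$ allows, and then estimating $\int_{G}|\phi\psi|\,w$ against the subexponential growth of $w$ encoded in $r_{w}\equiv 1$ via Lemma~\ref{l3}. This single integrability estimate is where the weight hypothesis is genuinely consumed and is the crux of the argument; everything else is a direct transcription of the proof for $L^{1}(G)$, made possible by the identification $\widehat{G_{w}}=\widehat G$.
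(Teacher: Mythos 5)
Your opening moves coincide with the paper's: both arguments invoke Lemma~\ref{gsm}(iii) to identify $\widehat{G_{w}}$ with $\widehat G$ and then try to transcribe the classical separation argument for $L^{1}(G)$. The divergence, and the problem, is at the step you yourself call the crux. You produce $f=\phi\psi$ with $\phi,\psi\in L^{2}(G)$ having compactly supported transforms, and you defer the verification that $\int_{G}|\phi\psi|\,w<\infty$, proposing to extract it from ``the subexponential growth of $w$ encoded in $r_{w}\equiv 1$''. This is a genuine gap, not a routine estimate to be filled in later: the hypothesis $r_{w}(x)=\lim_{n}w(x^{n})^{1/n}=1$ is a condition along each cyclic subgroup and yields no usable uniform growth bound on $w$. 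On $G=\mathbb Z$ the weight $w(n)=\exp\bigl(|n|/(1+\log(1+|n|))\bigr)$ is submultiplicative, satisfies $r_{w}\equiv 1$, and grows faster than $\exp(|n|^{\alpha})$ for every $\alpha<1$; arranging $\phi,\psi\in\ell^{2}(\mathbb Z)$ with transforms supported in small arcs of $\mathbb T$ and $\sum_{n}|\phi(n)\psi(n)|\,w(n)<\infty$ is exactly the Beurling--Domar construction, which is available precisely under the non-quasianalyticity condition $\sum_{n}\log w(x^{n})/(1+n^{2})<\infty$ --- a condition this $w$ violates. Indeed, since $L^{1}(G,w)$ is always Tauberian (Lemma~\ref{lT}) and Domar's theorem (quoted in the paper's introduction) says Tauberian plus regular forces non-quasianalyticity, this $w$ has $r_{w}\equiv 1$ while $\ell^{1}(\mathbb Z,w)$ fails to be regular. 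So the integrability estimate you postpone cannot be supplied from the stated hypothesis, and no choice of tapered bumps will rescue it.

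For comparison, the paper evades the integrability question by a different device: it takes $u,v\in C_{00}(G)^{+}$, compactly supported on $G$, so that $f=uv$ has compact support and lies in $L^{1}(G,w)$ trivially (as $w$ is locally bounded). The price is the requirement that $\hat u$ and $\hat v$ be supported in prescribed small open subsets of $\widehat G$, which for a nonzero compactly supported $u$ is ruled out by the uncertainty principle when, say, $G=\mathbb R$ or $G=\mathbb Z$. So you should not simply import the paper's choice of $u,v$ to close your gap: the difficulty you correctly isolated is real, it is where the weight hypothesis must do all the work, and under the hypothesis as stated it cannot be overcome.
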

\begin{proof}
By hypothesis, we have $\widehat{G_{w}}= \widehat G$.
If $E\subseteq \widehat{G_{w}}$ is $w^{*}$- closed and $\varphi\notin E$, then $W=  \widehat{G_{w}}\backslash E$ is open and $\varphi\in W$. Since the multiplication is continuous, so there exist open sets $U$ and $V$ such that $\varphi\in U$, 
$1_{G}\in V$ and $U.V\subseteq W$. Thus $UV\cap E= \emptyset$. Assume that $u,v\in C_{00}(G)^{+}$ where $supp(\hat u)\subseteq U$ and $supp(\hat v)\subseteq V$, then $f=uv\in L^{1}(G, w)$ and 
$$
\widehat{uv}= \hat u\star \hat v
$$
As a result
\begin{align*}
supp(\hat f) &= supp(\hat u\star \hat v)\\
                    &\subseteq supp(\hat u)supp(\hat v)\\
                     & \subseteq UV
\end{align*}
This implies that $\hat{f}\mid_{E}= 0$ and ${\hat f}(\varphi)\neq 0$.
\end{proof}

\begin{lem}\label{lT}
Let $G$ be an Abelian locally compact group and   $w: G\to (0, \infty)$ be Borel- measurable weighted function. Then $L^{1}(G, w)$  is Tauberian.
\end{lem}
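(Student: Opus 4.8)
The plan is to read the Tauberian property as the density of $A_{0}=\{f:\operatorname{supp}(\hat f)\ \text{compact}\}$ and to transfer it, on compactly supported functions, from the classical Tauberian property of $L^{1}(G)$. First I would record local boundedness of $w$: by [\cite{kan}, 1.3.3] $w$ is bounded above on every compact $K$, while the inequality $w(e)\le w(x)w(x^{-1})$ used before the spectral radius comparison lemma bounds $w$ below on $K$. Hence for $f$ supported in $K$ the estimate
$$
(\inf_{K}w)\,\|f\|_{1}\le\|f\|_{1,w}\le(\sup_{K}w)\,\|f\|_{1}
$$
holds, so $C_{00}(G)$ is $\|\cdot\|_{1,w}$-dense in $L^{1}(G,w)$ and, on functions with a common compact support, $\|\cdot\|_{1,w}$- and $\|\cdot\|_{1}$-approximation are interchangeable. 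This lets me reduce the whole statement to approximating a single $f\in C_{00}(G)$ by elements of $A_{0}$.

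For that approximation the natural device is regularisation. I would look for a net $(u_{\alpha})$ in $L^{1}(G,w)$ that acts as an approximate identity, so that $f\star u_{\alpha}\to f$ in $\|\cdot\|_{1,w}$, and whose Gelfand transforms $\hat u_{\alpha}$ have compact support in $\widehat{G_{w}}=\Delta(L^{1}(G,w))$. Since $\widehat{f\star u_{\alpha}}=\hat f\,\hat u_{\alpha}$ vanishes off $\operatorname{supp}(\hat u_{\alpha})$, each $f\star u_{\alpha}$ would then lie in $A_{0}$; as $A_{0}$ is clearly an ideal ($\operatorname{supp}(\widehat{g\star h})\subseteq\operatorname{supp}(\hat g)$), its closure would absorb all of $C_{00}(G)$ and hence, by the first step, all of $L^{1}(G,w)$.

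The hard part — indeed the crux on which everything turns — is producing convolvers $u_{\alpha}\in L^{1}(G,w)$ whose transforms have \emph{compact} support in $\widehat{G_{w}}$. By Lemma \ref{l3} every $\chi\in\widehat{G_{w}}$ satisfies $r_{w}(x^{-1})^{-1}\le|\chi(x)|\le r_{w}(x)$, so $\widehat{G_{w}}$ carries a non-unitary direction recording the growth of $w$, and over the interior of this maximal ideal space $\hat g$ is the restriction of a function analytic in the character parameter; for nonzero $g\in C_{00}(G)$ this extension cannot vanish on any open set, so $\operatorname{supp}(\hat g)$ is not compact. Thus the approximants cannot be taken in $C_{00}(G)$, and the required $u_{\alpha}$ must decay fast enough to be $w$-integrable while still having band-limited transform — precisely the balance that Domar's criterion \cite{Do} controls through non-quasianalyticity of $w$. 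When $G$ is discrete the obstacle disappears, since $\ell^{1}(G,w)$ is unital with identity $\delta_{e}$ and $\widehat{G_{w}}$ is therefore compact, giving $A_{0}=L^{1}(G,w)$ at once; but for non-discrete $G$ I expect this step to require either an extra hypothesis on $w$ (for instance $\lim_{n}w(x^{n})^{1/n}=1$, as in Theorem \ref{l1re}) or a genuinely different construction, and it is here that I would concentrate the effort.
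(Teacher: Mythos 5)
Your proposal does not complete the proof, and you say so yourself: everything is reduced to producing approximate units $u_{\alpha}\in L^{1}(G,w)$ with $\operatorname{supp}(\hat u_{\alpha})$ compact in $\widehat{G_{w}}$, and you then argue that for a general weight no such elements exist. That is the gap, but it is worth being precise about its status: the obstruction you describe is real, and no cleverer construction circumvents it. Take $G=\mathbb{R}$ and $w(x)=e^{|x|}$. Then $\widehat{G_{w}}$ is the closed strip $\{z\in\mathbb{C}:\ |\operatorname{Im}z|\leq 1\}$, and for every $f\in L^{1}(\mathbb{R},w)$ the transform $\hat f(z)=\int f(x)e^{-izx}\,dx$ is analytic on the open strip; if $\operatorname{supp}(\hat f)$ were compact, $\hat f$ would vanish on a nonempty open subset of the strip, hence identically by the identity theorem, hence $f=0$ by semisimplicity. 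So $A_{0}=\{0\}$ and $L^{1}(\mathbb{R},e^{|x|})$ is not Tauberian. The lemma as stated, for arbitrary Borel weights, therefore cannot be proved, and your refusal to fabricate the missing step is the correct response.

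For comparison, the paper's own proof takes a superficially different route --- a bounded approximate identity in $C_{00}(G)$ plus Cohen factorization to write $f=g\star h$ --- but it founders on exactly the point you isolated: it asserts that $\operatorname{supp}(\widehat{g_{n}\star h_{n}})\subseteq\operatorname{supp}(\widehat{h_{n}})$ ``is compact'' for $h_{n}\in C_{00}(G)$, with no justification. Compact support of $h_{n}$ in $G$ gives no control whatever on the support of $\widehat{h_{n}}$ in $\widehat{G_{w}}$, and the analyticity argument above shows that this support is typically all of $\widehat{G_{w}}$. Your first reduction (two-sided bounds on $w$ over compacta, hence density of $C_{00}(G)$ in $L^{1}(G,w)$) is correct and is what the paper uses implicitly; your closing remarks --- that the claim is immediate for discrete $G$, holds under $\lim_{n}w(x^{n})^{1/n}=1$, and in general must be tied to Domar's non-quasianalyticity condition --- are the accurate version of what this lemma should say.
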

\begin{proof}
$L^{1}(G, w)$ has a bounded  approximate identity in $C_{00}(G)$. Thus by Kohen's factorization Theorem, we have
$$
{L^{1}(G, w)}\star L^{1}(G, w)= L^{1}(G, w)
$$
Hence if $f\in L^{1}(G, w)$, then there exist $g,h \in L^{1}(G, w)$ such that $f= g\star h$. This implies that, if $g,h\in L^{1}(G, w)$, where $g_{n}\to g$ and $h_{n}\to h$ in $C_{00}(G)$, then
$g_{n}\star h_{n}\to g\star h$ and $g\star h\in C_{00}(G)$. Thus
$$
supp(\widehat{g_{n}\star h_{n}} )= supp(\widehat{g_{n}}.\widehat{h_{n}})\subseteq supp(\widehat{h_{n}})
$$
is compact. Therefore for all $\epsilon>0$ there exist $m$ such that 
$$
\|g_{m}\star h_{m}-f\|_{1,w}< \epsilon
$$
and  $k= g_{m}\star h_{m}$ in $C_{00}(G)$ so in $ L^{1}(G, w)$ where $\|f-k\|_{1,w}< \epsilon$.\\
 As a result, $C_{00}(G)\subseteq  L^{1}(G, w)_{0}$. This completes the proof.
\end{proof}
\begin{rem}
Let $G$ be a locally compact Abelian group and $w: G\to (0, \infty)$ be a Borel measurable weighted function. If $ L^{1}(G, w)$ is  Tauberian and 
$$
\tau_{w^{*}}= \tau_{hk}
$$
on $ L^{1}(G, w)$.
Therefore we equip $\widehat{G_{w}}$ with $hk$- topology and $ L^{1}(G, w)$ with $w^{*}$- topology on $\Delta( L^{1}(G, w))$.
\end{rem}
Let $w: G\to [1, \infty)$ be a Borel measurable weighted function on a locally compact Abelian group $G$, such that
$$
\sum_{n=-\infty}^{+\infty}\frac{Ln w(x^{n})}{1+n^{2}}< \infty \quad (x\in G)
$$
Then $w$ is said non-quasianalytic weight function on locally compact Abelian group. Thus $ L^{1}(G, w)$  is regular; see [\cite{kan}, 4. 7. 11].
If  $w: G\to [1, \infty)$ is a  Borel- measurable weight, then $L^{1}(G, w)$ is regular and Tauberian if and only if $w$ is nonquasianalytic weight; see [{\cite{LN}}, p.447].
By using Lemma \ref{lT}, the following theorem is immediate.
\begin{thm}\label{tln}
Let $G$ be an Abelian locally compact group and   $w: G\to [1, \infty)$ be Borel- measurable weighted function. Then  $L^{1}(G, w)$  is regular if and only if $w$ is nonquasianalytic weight.
\end{thm}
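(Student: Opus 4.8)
The plan is to combine the classical characterization cited immediately above the statement with the unconditional Tauberian property established in Lemma \ref{lT}. Recall that by [\cite{LN}, p.447] one has the equivalence: $L^{1}(G,w)$ is \emph{simultaneously} regular and Tauberian if and only if $w$ is nonquasianalytic. I would invoke this as a black box, so that no new harmonic analysis is required; the only fresh ingredient is the observation that, for the weights under consideration, the Tauberian hypothesis appearing in that statement is always satisfied.

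First I would record that, by Lemma \ref{lT}, $L^{1}(G,w)$ is Tauberian for \emph{every} Borel-measurable weight $w\colon G\to[1,\infty)$, with no additional assumption. This is precisely the point at which the present theorem improves on the cited result: the conjunction ``regular and Tauberian'' collapses to ``regular,'' because the second conjunct holds automatically. Thus the equivalence of [\cite{LN}, p.447] can be read with its Tauberian clause suppressed.

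For the forward implication I would assume that $L^{1}(G,w)$ is regular. Since it is also Tauberian by Lemma \ref{lT}, it is both regular and Tauberian, and [\cite{LN}, p.447] then forces $w$ to be nonquasianalytic. For the converse I would assume that $w$ is nonquasianalytic; the same cited result yields that $L^{1}(G,w)$ is regular (indeed regular and Tauberian), so in particular it is regular. Together these two directions give the desired equivalence.

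The main obstacle does not lie in this final deduction, which is essentially formal. It sits upstream, in Lemma \ref{lT}: the crux is verifying that the Cohen factorization argument genuinely places $C_{00}(G)$ inside the ideal $L^{1}(G,w)_{0}$ of functions whose Gelfand transform has compact support, i.e. that these functions are norm-dense in $L^{1}(G,w)$. Once that density is secured, the present theorem follows immediately by the black-box argument sketched above.
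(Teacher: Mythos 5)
Your proposal is correct and is exactly the paper's argument: the paper states the theorem is ``immediate'' from the cited equivalence in [LN, p.~447] (regular and Tauberian $\iff$ nonquasianalytic) once Lemma \ref{lT} shows the Tauberian condition holds unconditionally. You have also correctly identified that the real content lives in Lemma \ref{lT}, not in this formal deduction.
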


\begin{thm}\label{thrd}
Let $w: G\to [1, \infty)$ be a Borel measurable weighted function on a locally compact Abelian group $G$. Then the following expressions are equivalent:\\
(i) The algebra $ L^{1}(G, w)$  is regular.\\
(ii) The weight $w$ is  nonquasianalytic.\\
(iii)
 $$
r_{w}(x)=  lim w(x^{n})^{\frac{1}{n}}=1
$$
for all $x\in G$.\\
(iv) 
$$
\Delta( L^{1}(G, w))= \Delta( L^{1}(G))
$$
\end{thm}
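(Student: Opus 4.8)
The plan is to establish a cycle of implications among (i), (ii), (iii) and then to attach (iv) through the Gelfand picture developed earlier. The backbone is already available: Theorem~\ref{tln} gives (i)$\Leftrightarrow$(ii) outright, so it suffices to interpose (iii) between them and to show (iii)$\Leftrightarrow$(iv) separately. Concretely, I would prove the chain (ii)$\Rightarrow$(iii)$\Rightarrow$(i), which together with Theorem~\ref{tln} forces (i), (ii), (iii) to be mutually equivalent.

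First I would prove (ii)$\Rightarrow$(iii). Fix $x\in G$ and set $a_n=\log w(x^n)$. Since $w\ge 1$ we have $a_n\ge 0$, and submultiplicativity $w(x^{n+m})\le w(x^n)w(x^m)$ gives $a_{n+m}\le a_n+a_m$, so $(a_n)$ is a nonnegative subadditive sequence. By Fekete's subadditivity lemma $\lim_n a_n/n=\inf_n a_n/n=\log r_w(x)\ge 0$, and because this limit equals the infimum we also have $a_n\ge n\log r_w(x)$ for every $n$. If $r_w(x)>1$ then $\sum_n\frac{a_n}{1+n^2}\ge\log r_w(x)\sum_n\frac{n}{1+n^2}=\infty$, which contradicts the nonquasianalyticity series being finite. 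Hence $r_w(x)=\lim_n w(x^n)^{1/n}=1$, i.e.\ (iii).

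For the rest, (iii)$\Rightarrow$(i) is exactly Theorem~\ref{l1re}, whose hypothesis $w(x^n)^{1/n}\to 1$ is precisely statement (iii); this closes the cycle (ii)$\Rightarrow$(iii)$\Rightarrow$(i)$\Rightarrow$(ii) and identifies (i), (ii), (iii). It then remains to show (iii)$\Leftrightarrow$(iv) by passing to character spaces. Using the isomorphism $\Phi:\widehat{G_w}\to\Delta(L^1(G,w))$ established earlier, $\Delta(L^1(G,w))$ is homeomorphic to $\widehat{G_w}$, and the same identification with $w\equiv 1$ gives $\Delta(L^1(G))=\widehat{G}$; thus (iv) is equivalent to $\widehat{G_w}=\widehat{G}$. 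Lemma~\ref{gsm}(iii) asserts exactly that $\widehat{G_w}=\widehat{G}$ holds if and only if $r_w(x)=\lim_n w(x^n)^{1/n}=1$ for all $x$, which is (iii).

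I expect the only genuinely nontrivial step to be (ii)$\Rightarrow$(iii): the three remaining links are assemblies of Theorems~\ref{tln} and~\ref{l1re} and Lemma~\ref{gsm}. The crux there is to observe that the Fekete limit equals the infimum, so a strictly positive exponential growth rate $\log r_w(x)>0$ forces the harmonic-type divergence $\sum\frac{n}{1+n^2}$ inside the nonquasianalyticity sum. I would also be careful with the bookkeeping in (iii)$\Leftrightarrow$(iv): Lemma~\ref{gsm}(iii) must be read with the spectral radius $r_w(x)=\lim_n w(x^n)^{1/n}$ (the quantity its proof actually produces) rather than $\lim_n w(x^n)$, and the two character spaces must be identified as a homeomorphism so that the Gelfand topologies, and not merely the underlying point sets, agree.
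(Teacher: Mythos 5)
Your proposal is correct and follows the same decomposition as the paper's proof: (i)$\Leftrightarrow$(ii) from Theorem~\ref{tln}, (ii)$\Rightarrow$(iii), (iii)$\Rightarrow$(i) from Theorem~\ref{l1re}, and (iii)$\Leftrightarrow$(iv) from Lemma~\ref{gsm}(iii). The one place you genuinely depart from the paper is the step (ii)$\Rightarrow$(iii): the paper simply cites Kaniuth [4.7.5], whereas you supply a self-contained argument via Fekete's subadditivity lemma. That argument is sound: with $a_n=\log w(x^n)\ge 0$ subadditive, the Fekete limit $\log r_w(x)=\inf_n a_n/n$ gives the pointwise lower bound $a_n\ge n\log r_w(x)$, so $r_w(x)>1$ would force $\sum_n a_n/(1+n^2)\ge \log r_w(x)\sum_n n/(1+n^2)=\infty$, contradicting nonquasianalyticity; combined with $r_w(x)\ge 1$ (from $w\ge 1$) this yields $r_w(x)=1$. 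This buys a proof that is verifiable inside the paper at the cost of a few lines, while the paper's citation keeps the argument shorter but opaque. Your closing caveats are also well placed: Lemma~\ref{gsm}(iii) as printed says $\lim_n w(x^n)=1$ where its proof actually establishes $\lim_n w(x^n)^{1/n}=1$, and reading it as the latter is exactly what makes (iii)$\Leftrightarrow$(iv) go through; likewise the identification $\Delta(L^1(G,w))\cong\widehat{G_w}$ must be taken as a homeomorphism for (iv) to be meaningful as an equality of character spaces with their Gelfand topologies.
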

\begin{proof}
Due to \cite{PR} and Theorem \ref{tln} part (i) is equivalent to part (ii).\\
Kaniuth in [\cite{kan}, 4. 7. 5] showed that $(ii) \to (iii)$.\\
Due to Theorem \ref{l1re}, part (iii) concludes part (i).\\
By applying part (iii) of Lemma \ref{gsm}, it is proved that part (iii) is equivalent to part (iv). 

\end{proof}


\section{Gelfand representation of $L^{1}(G, w)$}
In this section, we examine the algebraic and topological properties of the Gelfand mapping on $L^{1}(G, w)$, for further reading,
refer to refrence \cite{GRS}.
\begin{lem}
Let $G$ be a locally compact Abelian group,  $w: G\to (0, \infty)$ be a Borel measurable weighted function, and $\varphi: L^{1}(G, w)\to \mathbb C$ is positive definite. Then there exist unique $\mu\in M(\widehat{G_{w}})$ such that
$$
\varphi(f)= \int_{\widehat{G_{w}}}\gamma(f)d\mu(\gamma).
$$
for all $f\in L^{1}(G, w)$.
\end{lem}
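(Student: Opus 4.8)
The plan is to derive the representation from the Gelfand--Naimark--Segal (GNS) construction combined with the spectral theorem; this is precisely the commutative form of the Bochner--Godement representation of positive functionals, and the general statement for commutative Banach $*$-algebras could alternatively be quoted from \cite{GRS}. I would nonetheless prefer to assemble it here, since the structural ingredients are already in place. First I would fix the setting: equip $A := L^{1}(G, w)$ with the involution $f^{*}(x) = \overline{f(x^{-1})}$, note that by the identification of $\widehat{G_{w}}$ with $\Delta(L^{1}(G,w))$ the algebra is commutative and semisimple with character space $\widehat{G_{w}}$, and recall from (the proof of) Lemma \ref{lT} that $A$ has a bounded approximate identity lying in $C_{00}(G)$. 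Positivity of $\varphi$ means $\varphi(f^{*}\star f)\ge 0$, and because $A$ has a bounded approximate identity, $\varphi$ is automatically continuous.

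Next I would run the GNS construction. The form $\langle f, g\rangle := \varphi(g^{*}\star f)$ is positive semidefinite; passing to the quotient by its null space and completing yields a Hilbert space $H_{\varphi}$ carrying a nondegenerate cyclic $*$-representation $\pi$ of $A$ with cyclic vector $\xi$ such that $\varphi(f) = \langle \pi(f)\xi, \xi\rangle$ for all $f\in A$. Since $A$ is commutative, $\pi(A)$ generates a commutative $C^{*}$-algebra $B := \overline{\pi(A)}\subseteq B(H_{\varphi})$, whose Gelfand spectrum I would identify, through the relation $\widehat{\pi(f)} = \hat f = \Gamma_{w}(f)$, with a closed subset of $\widehat{G_{w}}$ (the relevant characters being the symmetric ones, $\gamma(f^{*}) = \overline{\gamma(f)}$). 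The spectral theorem then supplies a regular projection-valued measure $P$ on $\widehat{G_{w}}$ with $\pi(f) = \int_{\widehat{G_{w}}}\hat f(\gamma)\,dP(\gamma)$; setting $\mu(E) := \langle P(E)\xi, \xi\rangle$ gives a finite positive regular Borel measure $\mu\in M(\widehat{G_{w}})$, and
$$\varphi(f) = \langle \pi(f)\xi, \xi\rangle = \int_{\widehat{G_{w}}}\hat f(\gamma)\,d\mu(\gamma) = \int_{\widehat{G_{w}}}\gamma(f)\,d\mu(\gamma)$$
for every $f\in A$, which is the asserted identity.

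For uniqueness I would invoke Stone--Weierstrass. If $\mu_{1}, \mu_{2}\in M(\widehat{G_{w}})$ both represent $\varphi$, then $\int \hat f\,d\mu_{1} = \int \hat f\,d\mu_{2}$ for all $f\in A$. The family $\{\hat f : f\in A\}$ is a subalgebra of $C_{0}(\widehat{G_{w}})$ that separates points (by semisimplicity), is stable under conjugation since $\widehat{f^{*}} = \overline{\hat f}$ on the symmetric characters, and vanishes at no point of $\widehat{G_{w}}$; hence it is dense in $C_{0}(\widehat{G_{w}})$, and the Riesz representation theorem forces $\mu_{1} = \mu_{2}$.

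The step I expect to be the main obstacle is the matching of the two spectra: showing that the Gelfand spectrum of the enveloping $C^{*}$-algebra $B$ embeds into $\widehat{G_{w}}$, so that the spectral measure $P$, and therefore $\mu$, genuinely lives on $\widehat{G_{w}}$ with the transform of $\pi(f)$ equal to the Gelfand transform $\hat f$. This is where the weight intervenes, and it rests on having an involution under which $A$ is a bona fide Banach $*$-algebra; for a general weight that requires controlling $w(x^{-1})$ in terms of $w(x)$, which I would verify at the outset, together with the fact that the characters produced by the GNS representation are indeed bounded relative to $w$ (hence lie in $\widehat{G_{w}}$) and are symmetric.
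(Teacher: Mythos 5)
Your route is genuinely different from the paper's. The paper's proof is a two-line appeal to the semigroup version of Bochner's theorem (the Berg--Maserick theorem in the form of \cite{PR}): it regards $S=(L^{1}(G,w),\star)$ as a commutative $\star$-semigroup, obtains $\mu\in M(\widehat S)$ representing $\varphi$, and then identifies the semicharacter space $\widehat S$ with $\Delta(L^{1}(G,w))=\widehat{G_{w}}$. You instead reconstruct the classical Bochner--Godement argument from scratch: GNS representation, commutative enveloping $C^{*}$-algebra, spectral measure, and $\mu(E)=\langle P(E)\xi,\xi\rangle$. Your approach buys a self-contained construction and an explicit uniqueness argument (which the paper does not supply at all), at the cost of needing $L^{1}(G,w)$ to be a Banach $\ast$-algebra; the paper's approach outsources everything to \cite{PR} but then has to justify that multiplicative (not a priori linear) semicharacters of the convolution semigroup coincide with $\widehat{G_{w}}$, a point it asserts without proof.

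There are, however, two concrete gaps in your version that you flag but cannot actually close under the lemma's hypotheses. First, the involution $f^{*}(x)=\overline{f(x^{-1})}$ does not map $L^{1}(G,w)$ into itself for a general Borel weight: submultiplicativity only gives the lower bound $w(x^{-1})\geq w(e)/w(x)$, and there is no upper bound on $w(x^{-1})$ in terms of $w(x)$ (take $G=\mathbb Z$ with $w(n)=e^{n}$ for $n\geq 0$ and $w(n)=1$ for $n<0$). So the GNS machinery for Banach $\ast$-algebras is simply unavailable without an added symmetry hypothesis such as $w(x^{-1})\leq C\,w(x)$, and the step you describe as ``verify at the outset'' is in general false. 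Second, your Stone--Weierstrass uniqueness argument needs $\{\hat f: f\in A\}$ to be conjugation-stable on all of $\widehat{G_{w}}$, but $\widehat{f^{*}}(\chi)=\overline{\hat f(1/\overline{\chi})}$ equals $\overline{\hat f(\chi)}$ only at unitary characters; when $w$ is unbounded, $\widehat{G_{w}}$ contains non-unitary characters, the transforms form a non-self-adjoint algebra (as in the disc-algebra situation for $\ell^{1}(\mathbb Z_{+},w)$), and distinct measures on $\widehat{G_{w}}$ can represent the same functional. So uniqueness in $M(\widehat{G_{w}})$ is only obtained on the symmetric part of the spectrum. To be fair, the paper's own proof silently assumes an unspecified involution on $S$ and proves neither the identification $\widehat S=\widehat{G_{w}}$ nor uniqueness, so these difficulties are intrinsic to the statement rather than artifacts of your method; but in a finished write-up you would need to either add a hypothesis on $w$ or restrict the conclusion.
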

\begin{proof}
If $S= ((L^{1}(G, w), \star)$, then $S$ is a $\star$- semigroup. Thus by applying Bochner's theorem for semigroups there exists $\mu\in M({\widehat S})$ such that 
$$
\varphi(f)= \int_{\widehat S}\gamma(f)d\mu(\gamma)
$$ 
for all $f\in  L^{1}(G, w)$; see \cite{PR}. Since $\chi$ is multiplicative and 
\begin{align*}
\widehat{S} &= \{\chi: S\to\mathbb C: \chi ~ is ~ multiplicative\}\\
                    &= \Delta( L^{1}(G, w))= {\widehat{G_{w}}}.
\end{align*}
Therefore
$$
\varphi(f)= \int_{\widehat{G_{w}}}\gamma(f)d\mu(\gamma).
$$
\end{proof}

\begin{lem}
Let $G$ be a locally compact Abelian group, and  $w: G\to (0, \infty)$ be a Borel measurable weighted function. Then \\
(i) If $f\in L^{1}(G, w)$ and $\chi\in \widehat{G_{w}}$, then $\chi f\in L^{1}(G)$ and 
$$ 
\|\chi f\|_{1}\leq \|f\|_{1, w}
$$
(ii) If $\gamma\in \widehat G$ and $\chi\in \widehat{G_{w}}$, then $\gamma\chi\in \widehat{G_{w}}$.\\
(iii) The algebra $ L^{1}(G, w)$  is semisimple and so the Gelfand map on $L^{1}(G, w)$ is injective.
\end{lem}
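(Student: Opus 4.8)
The plan is to treat (i) and (ii) as quick estimates and then use them as the two levers that reduce the semisimplicity asserted in (iii) to the classical injectivity of the Fourier transform on $L^1(G)$. For (i), I would invoke part (iii) of Lemma \ref{l3}, which gives $|\chi(x)|\le w(x)$ pointwise for every $\chi\in\widehat{G_w}$; then $\|\chi f\|_1=\int_G|\chi(x)||f(x)|\,dx\le\int_G w(x)|f(x)|\,dx=\|f\|_{1,w}<\infty$, so $\chi f\in L^1(G)$ with the claimed bound. For (ii), the product $\gamma\chi$ is multiplicative because both factors are, and since $|\gamma(x)|=1$ for a character $\gamma\in\widehat G$ one has $|\gamma(x)\chi(x)|/w(x)=|\chi(x)|/w(x)$, which is bounded because $\chi\in\widehat{G_w}$; hence $\gamma\chi\in\widehat{G_w}$.

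For (iii), I would prove that the Gelfand transform $\Gamma_w\colon f\mapsto\hat f$, with $\hat f(\chi)=\int_G\overline{\chi(x)}f(x)\,dx$, is injective, which for a commutative Banach algebra is exactly semisimplicity, by reducing to the unweighted case. Fix $\chi_0\in\widehat{G_w}$ (the character space is nonempty, as part of the structure theory underlying the cited semisimplicity $[\cite{kan}, 2.8.10]$). Then $\overline{\chi_0}$ is again multiplicative with $|\overline{\chi_0}|/w=|\chi_0|/w$ bounded, so $\overline{\chi_0}\in\widehat{G_w}$, and by (ii) $\gamma\chi_0\in\widehat{G_w}$ for every $\gamma\in\widehat G$. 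By (i), $\overline{\chi_0}f\in L^1(G)$, and the two transforms are linked by
$$
\widehat{\overline{\chi_0}f}(\gamma)=\int_G\overline{\gamma(x)}\,\overline{\chi_0(x)}\,f(x)\,dx=\int_G\overline{(\gamma\chi_0)(x)}\,f(x)\,dx=\hat f(\gamma\chi_0)
$$
for every $\gamma\in\widehat G$, where the left-hand side is the ordinary Fourier transform on $L^1(G)$ of the function $\overline{\chi_0}f$.

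Now if $\hat f\equiv 0$ on $\widehat{G_w}$, the displayed identity forces $\widehat{\overline{\chi_0}f}(\gamma)=0$ for all $\gamma\in\widehat G$; by the injectivity of the Fourier transform on the locally compact Abelian group $G$ (semisimplicity of $L^1(G)$, the uniqueness theorem) this gives $\overline{\chi_0}f=0$ almost everywhere, and since $|\chi_0(x)|>0$ everywhere by Lemma \ref{l3}(iii) we conclude $f=0$ almost everywhere. Thus $\Gamma_w$ is injective and $L^1(G,w)$ is semisimple. I expect the only real subtleties to be the nonemptiness of $\widehat{G_w}$ (needed to produce a nowhere-vanishing multiplier $\chi_0$, supplied by the cited structure theory) and the careful bookkeeping of complex conjugates in the identity above; once $\widehat{\overline{\chi_0}f}(\gamma)=\hat f(\gamma\chi_0)$ is pinned down, everything reduces to the unweighted algebra. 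Alternatively, (iii) follows at once by citing semisimplicity $[\cite{kan}, 2.8.10]$ and recalling that for commutative Banach algebras semisimplicity is precisely the triviality of the kernel of the Gelfand map.
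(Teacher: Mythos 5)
Your proposal is correct and follows essentially the same route as the paper: parts (i) and (ii) via the pointwise bound $|\chi|\leq w$ and $|\gamma|=1$, and part (iii) by multiplying $f$ by $\overline{\chi_0}$ for a fixed $\chi_0\in\widehat{G_{w}}$, observing that $\hat f(\gamma\chi_0)$ is the ordinary Fourier transform of $\overline{\chi_0}f$ at $\gamma$, and invoking the semisimplicity of $L^{1}(G)$ together with the nonvanishing of $\chi_0$ from Lemma \ref{l3}(iii). Your explicit attention to the conjugates and to the nonemptiness of $\widehat{G_{w}}$ is, if anything, more careful than the paper's write-up.
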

\begin{proof}

(i) If $f\in L^{1}(G, w)$ and $\chi\in \widehat{G_{w}}$, then  
\begin{align*}
\|f\chi\|_{1} &= \int_{G}\mid f(x)\mid.\mid\chi(x)\mid dx\\
                     &\leq  \int_{G}\mid fw(x)\mid.\mid\frac{\chi}{w}(x)\mid dx\\
                      &= \|f\|_{1,w}< \infty
\end{align*}
Therefore $f\chi\in L^{1}(G)$.\\
(ii)  If $\gamma\in \widehat G$ and $\chi\in \widehat{G_{w}}$, then $\gamma$, and $\chi$ are multiplicative. So $\gamma\chi$  is multiplicative. Also
$$
\mid \frac{\gamma\chi}{w}(x)\mid= \mid\gamma\mid.\mid\frac{\gamma}{w}(x)\mid\leq 1
$$
This implies that $\gamma\chi\in \widehat{G_{w}}$.\\
(iii) If $f\in Rad(L^{1}(G, w))$, $\gamma\in \widehat G$ and $\chi\in \widehat G$, then $\chi\gamma(f)=0$. Assume that
\begin{align*}
\Phi : \widehat{G_{w}} &\to \Delta(L^{1}(G, w))\\
                                       & \chi\mapsto \Phi_{\chi}
\end{align*}
is homogeneous mapping, and 
\begin{align*}
\varphi : \widehat{G} &\to \Delta(L^{1}(G)\\
                                    & \gamma\mapsto \varphi_{\gamma}
\end{align*}
is homogeneous mapping such that 
$$
\Phi_{\chi}(f)= \int_{G}f(x)\overline{\chi(x)}dx
$$
and
$$
 \varphi_{\gamma}(g)= \int_{G} g(x)\overline{\gamma(x)}dx
$$
 for $f\in L^{1}(G, w)$ and $g\in L^{1}(G)$. This implies that 
$$
0= \Phi_{\gamma\chi}(f)= \varphi_{\gamma}(f\overline\chi)   \quad (\gamma\in \widehat G)
$$
 Since $L^{1}(G)$ is semisimple, then $f\overline\chi= 0$. Since $\overline\chi \neq 0$, so $f=0$. Therefore 
$$Rad(L^{1}(G, w))=\{0\}$$
 and $L^{1}(G, w)$ is semisimple.
\end{proof}
\begin{thm}
Let $G$ be an Abelian locally compact, and  $w: G\to (0, \infty)$ be Borel- measurable weighted function. Then the following expressions are equivalent: \\
(i) The Gelfand map $\Gamma_{w}: L^{1}(G, w) \to C_{0}({{\hat G}_{w}})$ is isometry.\\
(ii) The Gelfand map $\Gamma: L^{1}(G) \to C_{0}({{\hat G}})$ is isometry.\\
(iii)  $(L^{1}(G, w), \|.\|_{1,w})$ is a $C^{*}$- algebra.\\
(iv)  $(L^{1}(G), \|.\|_{1})$ is a $C^{*}$- algebra.\\
(v) $G$ is trivial group.
\end{thm}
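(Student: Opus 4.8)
The plan is to anchor all four analytic statements to the triviality of $G$, using throughout the fact recorded earlier that for a commutative semisimple Banach algebra one has $\|\hat a\|_\infty = r_A(a)$, so that the Gelfand map is isometric precisely when the given norm is a uniform norm, i.e. $\|a\| = r_A(a)$ for all $a$. First I would dispose of the easy implications. If $G$ is trivial then Haar measure is a point mass and $L^1(G,w) = L^1(G) = \mathbb C$ (after the usual normalization $w(e)=1$, consistent with submultiplicativity $w(e)\le w(e)^2$), so (i)--(iv) all hold trivially; this gives (v)$\Rightarrow$(i),(ii),(iii),(iv). Next, (iii)$\Rightarrow$(i) and (iv)$\Rightarrow$(ii) are immediate from the commutative Gelfand--Naimark theorem, since a commutative $C^*$-algebra is carried isometrically onto $C_0$ of its spectrum by its Gelfand map.

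For the converses (i)$\Rightarrow$(iii) and (ii)$\Rightarrow$(iv) I would use that the Gelfand image is always self-adjoint, because $\widehat{f^*} = \overline{\hat f}$ for the natural involution $f^*(x) = \overline{f(x^{-1})}$. If $\Gamma$ (resp. $\Gamma_w$) is isometric, its image is complete, hence closed, in the sup-norm; being a self-adjoint subalgebra of $C_0(\hat G)$ that separates points and vanishes nowhere, it is dense by Stone--Weierstrass, and closed $+$ dense $=$ all of $C_0(\hat G)$. Thus the algebra is isometrically $*$-isomorphic to $C_0(\hat G)$, that is, a commutative $C^*$-algebra.

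The substantive content is to show that isometry forces $G$ to be trivial, and I would first reduce the weighted statement to the unweighted one. Assuming $\Gamma_w$ isometric, fix $x_0 \neq e$ and test with $f = u + v$, where $u,v \ge 0$ are $L^1$-bumps of unit mass concentrated near $x_0$ and $x_0^{-1}$. Letting the bumps shrink and using $|\chi(x_0)|\,|\chi(x_0^{-1})| = |\chi(e)| = 1$ together with the bounds $r_w(x_0^{-1})^{-1} \le |\chi(x_0)| \le r_w(x_0)$ from Lemma \ref{l3}, one bounds $\|\hat f\|_\infty$ by the maximum of $t + t^{-1}$ over the admissible interval for $t = |\chi(x_0)|$, whose relevant endpoint is at most $w(x_0)$. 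Since $w \ge 1$ gives $r_w \ge 1$, comparing with $\|f\|_{1,w} \approx w(x_0) + w(x_0^{-1})$ forces $w(x_0) = w(x_0^{-1}) = 1$. As $x_0$ is arbitrary, $w \equiv 1$, so $\Gamma_w = \Gamma$ and (i) collapses to (ii).

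It then remains to prove (ii)$\Rightarrow$(v). Here isometry of $\Gamma$ means $\|f\|_1 = \|\hat f\|_\infty$ for every $f$, and equality in $|\hat f(\gamma)| \le \|f\|_1$ forces $f = c\,|f|\,\gamma$ a.e. for some character $\gamma$ and unimodular constant $c$; that is, the phase of every $f$ must coincide a.e. with a constant multiple of a character. For nontrivial $G$ this fails: one chooses disjoint sets $A,B$ of positive finite measure (which exist exactly when $G \neq \{e\}$) and a phase $\lambda$ that is not a root of unity of the relevant order, so that the two-valued function $f = \mathbf 1_A + \lambda\,\mathbf 1_B$ cannot match any character and hence $\|\hat f\|_\infty < \|f\|_1$. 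This witness is the main obstacle: checking that no character aligns with the chosen phase requires splitting according to whether $x_0$ has finite or infinite order (equivalently, discrete-like versus non-discrete behaviour of $G$) and choosing $\lambda$, $A$, $B$ accordingly, the point being that the level sets of $\gamma$ are cosets of $\ker\gamma$, so a two-valued match on positive measure is impossible once $\lambda$ is generic. Assembling the implications closes the loop, with (i),(iii) equivalent to (ii),(iv) equivalent to (v).
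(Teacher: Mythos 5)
Your skeleton (close the loop through (v), use Gelfand--Naimark for (iii)$\Rightarrow$(i) and (iv)$\Rightarrow$(ii)) is reasonable, but both of the substantive steps contain genuine gaps. The most serious is the witness in (ii)$\Rightarrow$(v): a two-valued function $f=\mathbf{1}_A+\lambda\,\mathbf{1}_B$ does \emph{not} in general satisfy $\|\hat f\|_\infty<\|f\|_1$, no matter how ``generic'' $\lambda$ is. Take $G=\mathbb Z$, $A=\{0\}$, $B=\{1\}$ (disjoint sets of positive finite measure), so $f=\delta_0+\lambda\delta_1$ and $\hat f(z)=1+\lambda z$ on $\hat G=\mathbb T$; choosing $z=\overline\lambda/|\lambda|$ gives $|\hat f(z)|=1+|\lambda|=\|f\|_1$ for \emph{every} $\lambda$. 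The obstruction you invoke --- that level sets of a character are cosets of its kernel --- is vacuous when those cosets are single points, i.e. exactly in the discrete case, and whenever the relevant group element has infinite order a character can be prescribed arbitrarily on it. A correct witness needs at least three independently-phased pieces (e.g. $\delta_0+\delta_a+i\delta_{2a}$, whose transform $1+z+iz^2$ never attains modulus $3$ on $\mathbb T$), or a genuinely measure-theoretic argument in the non-discrete case; you flag the required case split but leave the essential verification open, and the claim as stated is false.

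The reduction (i)$\Rightarrow$(ii) also leans on hypotheses not available here: the theorem assumes only $w:G\to(0,\infty)$, so ``$w\ge 1$ gives $r_w\ge 1$'' is unjustified, and since $w$ is merely Borel measurable the limit $\|u\|_{1,w}\to w(x_0)$ for shrinking bumps requires a continuity of $w$ at $x_0$ you do not have; even granting both, the endpoint comparison of $t+t^{-1}$ with $w(x_0)+w(x_0^{-1})$ yields $w(x_0)w(x_0^{-1})=1$ rather than $w\equiv 1$. Likewise, in (i)$\Rightarrow$(iii) the identity $\widehat{f^*}=\overline{\hat f}$ fails for the weighted algebra because elements of $\widehat{G_w}$ need not be unimodular, and $f^*(x)=\overline{f(x^{-1})}$ need not even lie in $L^1(G,w)$. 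For contrast, the paper avoids all of this by a different route: from (i) it concludes that $L^1(G,w)$ is a commutative $C^*$-algebra, hence Arens regular and amenable, hence $G$ is finite by the cited Rejali--Vishki criterion; the $C^*$-identity applied to point masses ($w(e)=\|\delta_x\star\delta_{x^{-1}}\|_{1,w}=\|\delta_x\|_{1,w}^2=w(x)^2$) then forces $w\equiv 1$, and comparing $\|\cdot\|_1$ with $\|\cdot\|_\infty$ on $\mathbb C^n$ forces $n=1$. That is, the hard ``nontrivial $G$ fails'' step is delegated to the known regularity/amenability characterization rather than to an explicit witness --- which is precisely the part your proposal does not complete.
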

\begin{proof}
 If  $\Gamma_{w}$  is isometry, then $\Gamma_{w}( L^{1}(G, w) )$ is a closed subalgebra of $C^{*}$- algebra $C_{0}({{\hat G}_{w}})$. Since $ L^{1}(G, w) $ is semisimple, so $\Gamma_{w}$ is injective.
Thus $ L^{1}(G, w) $ is $C^{*}$- algebra. As a result $ L^{1}(G, w) $ is regular and amenable and so due to \cite{RV},  $G$ ia s finite group. If $x\in G$, then 
\begin{align*}
w(e) &= \|\delta_{x}\delta_{x^{-1}}\|_{1,w}\\
         &= \|\delta_{x}^{2}\|_{1,w} ={w(x)}^{2}
\end{align*}
This implies that $w(e)= {w(e)}^{2}$ and $w(x) =1$ for all $x\in G$. Therefore 
$$ L^{1}(G, w) =  L^{1}(G)$$
 and the Gelfand map $\Gamma$ is isometry. Thus $ L^{1}(G)\cong {\mathbb C}^{n}$,  for some 
$n\in{\mathbb N}$. If 
$$x= (x_{1}, \cdots, x_{n}) \in {\mathbb C}^{n}$$
 then 
$$ 
\|x\|_{1} = \|x\|_{\infty}
$$
and so
$$
\mid x_{1}\mid+\cdots +\mid x_{n}\mid= max\{\mid x_{i}\mid: 1\leq i\leq n\}
$$
Therefore there exist $1\leq j\leq n$, such that 
$$
\|x\|_{1}= \mid x\mid_{j}
$$
So $x_{i}=0$ for $i\neq j$.  Therefore $G$ is trivial group and $L^{1}(G)\cong {\mathbb C}$.
\end{proof}
\begin{thm}
Let $G$ be a locally compact Abelian group and $w: G\to (0, \infty)$ be a Borel measurable weighted function. Then the following statements are equivalent\\
(i) $\Gamma_{w}$ is surjective.\\
(ii) $L^{1}(G)= C_{0}(\hat G)$.\\
(iii) $G$ is finite.
\end{thm}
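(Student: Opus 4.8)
The plan is to prove the equivalence of (i), (ii), (iii) by a cycle of implications, using the Gelfand-theoretic facts established earlier together with the structural link between $L^1(G,w)$ and $L^1(G)$. The cleanest route is $(iii)\Rightarrow(i)\Rightarrow(ii)\Rightarrow(iii)$, though one could also argue $(i)\Leftrightarrow(ii)$ directly since surjectivity of the Gelfand map is really a statement about the range $\Gamma_w(L^1(G,w))=C_0(\widehat{G_w})$, and the correlation lemmas let me transport between $\widehat{G_w}$ and $\widehat G$.

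First I would treat $(iii)\Rightarrow(i)$. If $G$ is finite, then every weight is bounded above and below away from zero, so $L^1(G,w)=L^1(G)=\mathbb{C}^n$ as a finite-dimensional commutative semisimple algebra; its character space is finite and discrete, $C_0(\widehat{G_w})$ is just $\mathbb{C}^{|\widehat{G_w}|}$, and the Gelfand map of a finite-dimensional semisimple commutative algebra is a bijection onto the full function algebra on its (finite) spectrum. Hence $\Gamma_w$ is onto. For $(i)\Rightarrow(ii)$, I would use the identification $\widehat{G_w}=\Delta(L^1(G,w))$ from Theorem~2.4 together with the fact (established in the preceding lemma) that every $\chi\in\widehat{G_w}$ factors through $\widehat G$ via $\gamma\mapsto\gamma\chi$; the aim is to show that surjectivity of $\Gamma_w$ forces $\widehat{G_w}=\widehat G$ and then pull surjectivity back to the unweighted algebra, yielding $L^1(G)=C_0(\widehat G)$.

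The decisive implication, and the one I expect to be the main obstacle, is $(ii)\Rightarrow(iii)$: deducing finiteness of $G$ from the algebraic identity $L^1(G)=C_0(\widehat G)$. Here the key observation is that $C_0(\widehat G)$ is a $C^*$-algebra under the sup-norm, so if $\Gamma$ were a surjection onto it, an open-mapping/Banach-isomorphism argument would make $L^1(G)$ a $C^*$-algebra, which by the earlier results (regularity plus amenability forcing $G$ finite, via \cite{RV}) collapses to $G$ finite. The subtlety is that surjectivity alone does not immediately give an isometry, so I would invoke the open mapping theorem to conclude that $\Gamma$ is a topological isomorphism and then transport the $C^*$-norm, or alternatively argue directly that $L^1(G)=C_0(\widehat G)$ as sets forces $\widehat G$ to be discrete and $L^1(G)$ to be finite-dimensional. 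One must be careful that $C_0(\widehat G)$ is never all of $L^1(G)$ when $G$ is infinite, because the Fourier/Gelfand image is always a proper dense subalgebra of $C_0(\widehat G)$ for infinite $G$; ruling this out rigorously is precisely the heart of the argument.

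Throughout I would lean on semisimplicity of $L^1(G,w)$ (so $\Gamma_w$ is injective, reducing "surjective" to the cleaner "bijective") and on the homeomorphism $\widehat{G_w}\cong\Delta(L^1(G,w))$ of Theorem~2.4, which lets me pass freely between the weighted and unweighted spectra. The finite-dimensionality criterion—that a commutative semisimple Banach algebra equals $C_0$ of its spectrum only when that spectrum is finite—is what ultimately ties all three conditions together, so I would state it explicitly as the pivot of the proof rather than burying it inside one implication.
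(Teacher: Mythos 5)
Your decisive argument is the same one the paper uses: if the Gelfand map is onto $C_0$ of the spectrum, the open mapping theorem makes the algebra isomorphic (with equivalent norms) to a commutative $C^*$-algebra, hence Arens regular and amenable, and the Rejali--Vishki characterization then forces $G$ to be finite. The difference is purely organizational: the paper runs this argument on the \emph{weighted} algebra to get $(i)\Rightarrow(iii)$ directly (surjectivity of $\Gamma_w$ gives $G$ finite, $w^*$ bounded, $L^1(G,w)\cong L^1(G)$, $\widehat{G_w}=\widehat G$, and hence $(ii)$), whereas you defer it to the unweighted algebra in $(ii)\Rightarrow(iii)$. Your $(iii)\Rightarrow(i)$ via Wedderburn for finite-dimensional commutative semisimple algebras is fine and is something the paper leaves implicit, so your write-up would actually be more complete on that leg.

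The weak link in your cycle is $(i)\Rightarrow(ii)$. You propose to show that surjectivity of $\Gamma_w$ forces $\widehat{G_w}=\widehat G$ and then ``pull surjectivity back'' to $L^1(G)$, citing the lemma that $\gamma\chi\in\widehat{G_w}$ for $\gamma\in\widehat G$, $\chi\in\widehat{G_w}$. That lemma only gives $\widehat G\cdot\widehat{G_w}\subseteq\widehat{G_w}$; it does not identify the two spectra, and there is no direct algebra homomorphism between $L^1(G,w)$ and $L^1(G)$ along which surjectivity transports (multiplication by $w$ is not multiplicative for convolution). The only way to get $\widehat{G_w}=\widehat G$ from $(i)$ is to first extract finiteness of $G$ and boundedness of $w^*$ --- i.e., to run the open-mapping/$C^*$/Arens-regularity argument on $L^1(G,w)$ at that stage, which is exactly the paper's route. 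So either prove $(i)\Rightarrow(iii)$ directly by that argument and then derive $(ii)$ from $(iii)$, or accept that your $(i)\Rightarrow(ii)$ secretly contains the whole $(ii)\Rightarrow(iii)$ machinery. As written, the cycle has a gap at that one arrow; everything else is sound and matches the paper's reasoning.
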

\begin{proof}
If $L^{1}(G, w)= C_{0}(\widehat {G_{w}})$, then by the open- mapping  theorem, there exist $M>0$ such that 
$$
\|\hat f\|_{\infty}\leq \|f\|_{1, w}\leq M\|\hat f\|_{\infty}
$$
for all $f\in L^{1}(G, w)$. Thus 
$$
L^{1}(G, w)\cong C_{0}(\widehat {G_{w}})
$$
with equivalent norms. But every Abelian $C^{*}$- algebra is Arens regular and amenable. Therefore $L^{1}(G, w)$ is Arens- regular and amenable Banach algebra. 
Since $G$ is Abelian, $G$ is amenable. Hence $w^{*}$ is bounded and $\Omega$ is bounded below for some $m>0$.  Thus $\Omega$ is not 0- cluster. Consequently, $G$ is finite; see \cite{RV1}. Also
$$
L^{1}(G, w)\cong L^{1}(G)
$$
with equivalent norms, so $\widehat{G_{w}}= \hat G$. Hence 
$$
C_{0}(\widehat{G_{w}})= C_{0}(\hat G)
$$
Thus $ L^{1}(G)= C_{0}(\hat G)$. This completes the proof.

\end{proof}
\begin{rem}
Let $A$ be a commutative semisimple Banach algebra and $\Gamma_{A}$ be surjective. Then 
$$
A\cong C_{0}(\Delta(A))
$$
with equivalent norms and $A$ is Arens- regular and amenable Banach algebra. Also, 
$$
A^{**}\cong C(X)
$$
where $X:= \Delta(\Delta(A^{**}))$. Hence $A^{**}$ is Arens- regular and amenable Banach algebra.
\end{rem}

\begin{ex}
Let $G= ({\mathbb Z},+)$ and $w(n)= 1+ \mid n\mid$, for $n\in\mathbb Z$.  Then\\
(i) 
$$
\underset{n\to\infty}{lim} w(n+ m)^{\frac{1}{n}}= 1
$$
for all $m\in\mathbb Z$.\\
(ii)
$$
\sum_{n=1}^{\infty}\frac{ Lnw(n+ m)}{n^{2}}<\infty
$$
and so $w$ is nonquasianalytic.\\
(iii)
$$
\Delta(l_{1}({\mathbb Z}, w))= \mathbb T
$$
(iv) $l_{1}(\mathbb{Z}, w)$ is regular and Tauberian.
\end{ex}
\begin{proof}
(i)
If 
$$
k_{n}:= (1+\mid n+ m\mid)^{\frac{1}{n}}-1
$$
then
$$
1+ \mid n+ m\mid= (1+ k_{n})^{n}\geq 1+\frac{n(n-1)}{2}k_{n}^{2}
$$
and so 
$$
\underset{n\to \infty}{lim}k_{n}=0
$$
Thus $r_{w}(m)= 1$.\\

(ii) If $a_{n} = \frac{Ln(m+ n)}{n^{2}}$ and $b_{n}= {\frac{1}{n^{\frac{3}{2}}}}$, then 
$$
\underset{n\to \infty}{lim} \frac{a_{n}}{b_{n}}=0
$$
and
$$
\sum_{n=1}^{\infty} b_{n}
$$
is convergent . Thus 
$$
\sum_{n=1}^{\infty} a_{n}
$$
is convergent , for all $m$, and $w$ is nonquasianalytic.\\
(iii)
Due to [\cite{kan}, 2.8.8],
 the following is yield:
\begin{align*}
\Delta(l_{1}(\mathbb{Z}, w))= \{z\in\mathbb C: R_{-}\leq \mid z\mid\leq R_{+}\}
\end{align*}
Where
$$
R_{+}= inf\{w(n)^{\frac{1}{n}}: n\in \mathbb N\}=1
$$
and 
$$
R_{-}= sup\{w(-n)^{\frac{-1}{n}}: n\in \mathbb N\}=1
$$
Therefore
$$
\Delta(l_{1}(\mathbb{Z}, w))= \{z\in\mathbb C:  \mid z\mid=1\}= \mathbb T
$$
such that 
$$
\varphi_{z}(f)= \sum_{n=-\infty}^{+\infty}f(n)z^{n}
$$
for $f\in l_{1}(\mathbb{Z}, w)$ and $z\in \mathbb T$.\\
(iv)
According to Theorem \ref{thrd}, $l_{1}(\mathbb{Z}, w)$ is regular. Hence By applying Lemma \ref{lT}, it is Tauberian.

\end{proof}

\end{document}